\newtheorem{theorem}{Theorem}[section]
\newtheorem{corollary}[theorem]{Corollary}
\newtheorem{definition}[theorem]{Definition}
\newtheorem{example}[theorem]{Example}
\newtheorem{lemma}[theorem]{Lemma}
\newtheorem{remark}[theorem]{Remark}
\newcommand{\e}{\varepsilon}
\newcommand{\N}{\mathbb{N}}
\newcommand{\sub}{\subseteq}
\newcommand{\ov}{\overline}
\newcommand{\rg}{\rightarrow}
\newcommand{\lo}{\longrightarrow}
\newcommand{\wt}{\widetilde}
\newcommand{\vf}{\varphi}
\newcommand{\fr}{\frac}
\newcommand{\al}{\alpha}
\newcommand{\la}{\lambda}
\newcommand{\pa}{\partial}
\newcommand{\bt}{\beta}
\begin{document}

\title{On locally 1-connectedness of quotient spaces and its applications to fundamental groups}

\author[affil1]{Ali Pakdaman}
\ead{a.pakdaman@gu.ac.ir}
\author[affil2]{Hamid Torabi}
\ead{hamid$_{-}$torabi86@yahoo.com}
\author[affil2]{Behrooz Mashayekhy}
\ead{bmashf@um.ac.ir}
\address[affil1]{Department of Mathematics, Faculty of Sciences, Golestan University, P.O.Box 155, Gorgan, Iran.}
\address[affil2]{Department of Pure Mathematics, Center of Excellence in Analysis on Algebraic Structures, Ferdowsi University of Mashhad, P.O.Box 1159-91775, Mashhad, Iran.}

\newcommand{\AuthorNames}{A. Pakdaman et al.}

\newcommand{\FilMSC}{Primary 55P65; Secondary 55Q52, 55Q70}
\newcommand{\FilKeywords}{Locally 1-connected, Quotient space, Quasitopological fundamental group}
\newcommand{\FilCommunicated}{Ljubisa Kocinac}
\newcommand{\FilSupport}{ }

\begin{abstract}
Let $X$ be a locally 1-connected metric space and $A_1,A_2,...,A_n$ be connected, locally path connected and compact pairwise disjoint subspaces of $X$. In this paper, we show that the quotient space $X/(A_1,A_2,...,A_n)$ obtained from $X$ by collapsing each of the sets $A_i$'s to a point, is also locally 1-connected. Moreover, we prove that the induced continuous homomorphism of quasitopological fundamental groups is surjective. Finally, we give some applications to find out some properties of the fundamental group of the quotient space $X/(A_1,A_2,...,A_n)$.
\end{abstract}

\maketitle

\makeatletter
\renewcommand\@makefnmark%
{\mbox{\textsuperscript{\normalfont\@thefnmark)}}}
\makeatother

\section{Introduction and Motivation}
Let $\sim$ be an equivalent relation on a topological space $X$. Then one can consider the quotient topological space $X/\sim$ and the quotient map $p:X\lo X/\sim$. In general, quotient spaces are not well behaved and it seems interesting to determine which topological properties of the space $X$ may be transferred to the quotient space $X/\sim$. For example, a quotient space of a simply connected or contractible space need not share those properties. Also, a quotient space of a locally compact space need not be locally compact.

Let $A_1,A_2,...,A_n$ be a finite collection of pairwise disjoint subspaces of a topological space $X$. Then by $X/(A_1,A_2,...,A_n)$ we mean the quotient space obtained from $X$ by identifying each of the sets $A_i$'s to a point. The main result of this paper is as follows:\\

{\bf Theorem A.} {\it If $X$ is a locally 1-connected metric space and $A_1,A_2,...,A_n$ are connected, locally path connected and compact pairwise disjoint subspaces of $X$, then the quotient space $X/(A_1,A_2,...,A_n)$ is locally 1-connected.}\\

The quasitopological fundamental group $\pi_1^{qtop}(X,x)$ of a based space $(X,x)$ is the familiar fundamental group $\pi_1(X,x)$ endowed with the
quotient topology with respect to the canonical function $\Omega(X,x)\rightarrow \pi_1(X,x)$ identifying
path components on the loop space $\Omega(X,x)$ with the compact-open topology (see \cite{Br}).
 It is known that this construction gives rise a homotopy invariant functor $\pi_1^{qtop}:hTop_*\lo qTopGrp$ from the homotopy category of based spaces to the category of quasitopological group and continuous homomorphism \cite{Br1}. By applying the above functor on the quotient map $p:X\lo X/\sim$, we have a continuous homomorphism $p_*:\pi_1^{qtop}(X,x)\lo\pi_1^{qtop}(X/\sim,*)$. Recently, the authors \cite{T} proved that if $X$ is a first countable, connected, locally path connected space and $A_1,A_2,...,A_n$ are disjoint path connected, closed subspaces of $X$, then the image of $p_*$ is dense in $\pi_1^{qtop}(X/(A_1,A_2,...,A_n))$. By using the results of \cite{T} and Theorem A we have the primary application of the main result of this paper as follows:\\

{\bf Theorem B.} {\it If $X$ is a locally 1-connected metric space and $A_1,A_2,...,A_n$ are disjoint connected, locally path connected and compact subspaces of $X$, then for each $a\in \bigcup_{i=1}^n A_i$ the continuous homomorphism $$p_*:\pi_1^{qtop}(X,a)\lo\pi_1^{qtop}(X/(A_1,A_2,...,A_n),*)$$
is an epimorphism.}\\

Also, by some examples, we show that $p_*$ is not necessarily onto if the hypotheses of the above theorem do not hold.

Finally, we give some applications of the above results to explore the properties of the fundamental group of the quotient space $X/(A_1,A_2,...,A_n)$. In particular, we prove that if $X$ is a separable, connected, locally 1-connected metric space and $A_1,...,A_n$ are connected, locally path connected and compact subsets of $X$, then $\pi_1(X/(A_1,A_2,...,A_n),*)$ is countable. Moreover, $\pi_1(X/(A_1,A_2,...,A_n),*)$ is finitely presented. Note that with the recent assumptions on $X$ and the $A_i$'s, $X/(A_1,A_2,...,A_n)$ is simply connected when $X$ is simply connected.
\newpage
\section{Notations and preliminaries}
For a pointed topological space $(X,x)$, by a path we mean a continuous map $\al : [0, 1]\lo X$. The
points $\al(0)$ and $\al(1)$ are called the initial point and the terminal point of $\al$, respectively.
A loop $\al$ is a path with $\al(0)=\al(1)$. For
a path $\al:[0,1]\lo X$, $\overline{\al}$ denotes a path such that $\ov{\al}(t)=\al(1-t)$, for all $t\in [0,1]$.
Denote $[0,1]$ by $I$, two paths $\al, \beta:I\lo X$ with the same initial and terminal points are called homotopic relative to end points if there exists a continuous map $F:I\times I\lo X$ such that
\begin{displaymath}
F(t,s)= \left\{
\begin{array}{lr}
\al(t)    &       s=0 \\
\beta(t)    &      s=1\\
\al(0)=\beta(0)   &  t=0\\
\al(1)=\beta(1)    &  t=1.
\end{array}
\right.
\end{displaymath}
The homotopy class containing a path $\al$ is denoted by $[\al ]$. Since most of the homotopies that
appear in this paper have this property and end points are the same, we drop the term ``relative homotopy'' for
simplicity. For paths $\al, \beta:I\lo X$ with $\al(1)=\beta(0)$, $\al*\beta$ denotes the concatenation of $\al$ and $\beta$, that is, a path
from $I$ to $X$ such that $(\al*\beta)(t)=\al(2t)$, for $0\leq t\leq 1/2$ and $(\al*\beta)(t)=\beta(2t-1)$,
for $1/2\leq t\leq1$.

For a space $(X,x)$, let $\Omega(X,x)$ be the space of based maps from $I$ to $X$ with the compact-open topology. A subbase
for this topology consists of neighborhoods of the form $\langle K,U\rangle=\{\gamma\in\Omega(X,x)\ |\ \gamma(K)\sub U\}$, where $K\sub I$ is compact
and $U$ is open in $X$. We will consistently denote the constant path at $x$ by $e_x$. The quasitopological fundamental group of a pointed space $(X,x)$ may be described as the usual
fundamental group $\pi_1(X,x)$ with the quotient topology with respect to the canonical map $\Omega(X,x)\lo\pi_1(X,x)$ identifying
homotopy classes of loops, denoted by $\pi_1^{qtop}(X,x)$. A basic account of quasitopological fundamental groups may be found in \cite{Br}, \cite{Br1} and \cite{C1}. For undefined notation, see \cite{M}.
\begin{definition} \cite{A}
A quasitopological group $G$ is a group with a topology such that inversion $G\lo G$, $g\lo
g^{-1}$, is continuous and multiplication $G \times G\lo G$ is continuous in each variable. A morphism of quasitopological groups is a continuous homomorphism.
\end{definition}
\begin{theorem} \cite{Br1}
$\pi_1^{qtop}$ is a functor from the homotopy category of based topological spaces to the category of quasitopological groups.
\end{theorem}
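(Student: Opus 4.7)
The plan is to decompose the functorial claim into four pieces: (a) for every based space $(X,x)$ the group $\pi_1^{qtop}(X,x)$ is a quasitopological group; (b) every based continuous map $f\colon(X,x)\lo(Y,y)$ induces a \emph{continuous} homomorphism $f_*$; (c) the assignment $f\mapsto f_*$ depends only on the based homotopy class of $f$; (d) identities and compositions are preserved. Items (c) and (d) are already true at the set-theoretic level by classical algebraic topology, and refining the target category by adding a topology does not alter the underlying maps, so they are immediate. The real content is in (a) and (b), and both are proved by combining the universal property of the quotient map $q_X\colon\Omega(X,x)\lo\pi_1^{qtop}(X,x)$ with the fact that the loop-space operations are continuous in the compact-open topology.

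For (a), I would first verify that the reversal map $\iota\colon\Omega(X,x)\lo\Omega(X,x)$, $\gamma\mapsto\ov{\gamma}$, is continuous: pulling back a subbasic open $\langle K,U\rangle$ gives $\langle 1-K,U\rangle$, which is again subbasic. The composite $q_X\circ\iota$ is therefore continuous, is constant on the fibres of $q_X$ (since $[\ov{\gamma}]$ depends only on $[\gamma]$), and so by the universal property descends to a continuous inversion on $\pi_1^{qtop}(X,x)$. For multiplication in each variable, fix $[\al]\in\pi_1(X,x)$ and consider $L_\al\colon\Omega(X,x)\lo\Omega(X,x)$, $\gamma\mapsto\al*\gamma$; a routine compact-open check shows $L_\al$ is continuous. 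For any open $V\sub\pi_1^{qtop}(X,x)$ one has
\[
q_X^{-1}\bigl(L_{[\al]}^{-1}(V)\bigr)=L_\al^{-1}\bigl(q_X^{-1}(V)\bigr),
\]
which is open, so left translation by $[\al]$ is continuous; right translation is symmetric.

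For (b), post-composition $f_\#\colon\Omega(X,x)\lo\Omega(Y,y)$, $\gamma\mapsto f\circ\gamma$, is continuous in the compact-open topology (a standard fact, since $f_\#^{-1}\langle K,U\rangle=\langle K,f^{-1}(U)\rangle$). The naturality square $q_Y\circ f_\#=f_*\circ q_X$ then shows that $f_*\circ q_X$ is continuous; since $q_X$ is a quotient map, $f_*$ itself is continuous. Combined with (a), (c) and (d), this assembles into the asserted functor $\pi_1^{qtop}\colon hTop_*\lo qTopGrp$.

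The main obstacle is not depth but discipline: each verification reduces to chasing a compact-open subbasic neighbourhood through a continuous map and then invoking the universal property of a quotient. The one point to flag, so as not to over-claim, is that the proof establishes separate continuity of multiplication but \emph{not} joint continuity; this is exactly why the target category is $qTopGrp$ rather than $TopGrp$, and attempting to strengthen the conclusion would fail in general.
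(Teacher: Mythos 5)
Your argument is correct. Note that the paper itself offers no proof of this statement: Theorem 2.2 is quoted from Brazas \cite{Br1}, so there is nothing internal to compare against. Your decomposition into (a)--(d) and the continuity checks --- reversal and post-composition pull subbasic sets $\langle K,U\rangle$ back to subbasic sets, concatenation with a fixed loop pulls them back to open sets, and everything then descends through the quotient maps $q_X$ via the identities $q_X^{-1}\bigl(L_{[\al]}^{-1}(V)\bigr)=L_\al^{-1}\bigl(q_X^{-1}(V)\bigr)$ and $q_Y\circ f_\#=f_*\circ q_X$ --- is exactly the standard argument in the cited source, and your closing caveat that only separate continuity of multiplication is obtained (which is why the target is $qTopGrp$ and not $TopGrp$) is the correct and essential point.
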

A space $X$ is {\it semi-locally simply connected} if for each point $x\in X$, there is an open
neighborhood $U$ of $x$ such that the inclusion $i : U \lo X$ induces the zero map $i_* : \pi_1(U,x)\lo \pi_1(X,x)$ or equivalently a loop in $U$ can be contracted inside $X$.
\begin{theorem} \cite{Br1}
Let $X$ be a connected and locally path connected topological space.
The quasitopological fundamental group $\pi_1^{qtop}(X, x)$ is discrete for some $x\in X$ if and
only if X is semi-locally simply connected.
\end{theorem}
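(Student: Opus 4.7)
The plan is to prove the two implications separately, exploiting that $\pi_1^{qtop}(X,x)$ carries the quotient topology from $\Omega(X,x)$, and that in any quasitopological group $G$, discreteness is equivalent to $\{e\}$ being open.

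For the easier direction ($\Rightarrow$), I would suppose $\pi_1^{qtop}(X,x_0)$ is discrete for some basepoint $x_0$. Then $\{[e_{x_0}]\}$ is open, so its preimage, the set $N$ of null-homotopic loops at $x_0$, is open in $\Omega(X,x_0)$. The constant loop $e_{x_0}$ lies in $N$, so $N$ contains a basic compact-open neighborhood $\bigcap_{i=1}^{k}\langle K_i,U_i\rangle$ with each $U_i\ni x_0$. Setting $V:=\bigcap_i U_i$ and invoking local path-connectedness, pick a path-connected open $W$ with $x_0\in W\subseteq V$. Any loop in $W$ based at $x_0$ lies in $\bigcap_i\langle K_i,U_i\rangle\subseteq N$, hence is null-homotopic in $X$, establishing semi-local simple connectedness at $x_0$. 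To extend to an arbitrary $y\in X$, use that $X$ is path-connected (from connectedness plus local path-connectedness) and transport via a change-of-basepoint along a path $\alpha$ from $x_0$ to $y$; this is a homeomorphism of quasitopological groups, so discreteness at $x_0$ yields discreteness at $y$, and the same argument yields SLSC at $y$.

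For the converse ($\Leftarrow$), the goal is to show that the set $N$ of null-homotopic loops is open in $\Omega(X,x_0)$. Fix $\gamma\in N$. By SLSC plus local path-connectedness, for every $y\in X$ choose a path-connected open neighborhood $V_y$ of $y$ such that every loop in $V_y$ is null-homotopic in $X$. Cover the compact image $\gamma(I)$ by finitely many such sets $V_1,\dots,V_n$ and pick a subdivision $0=t_0<t_1<\cdots<t_n=1$ with $\gamma([t_{i-1},t_i])\subseteq V_i$. Using local path-connectedness, pick path-connected open $W_i$ with $\gamma(t_i)\in W_i\subseteq V_i\cap V_{i+1}$ for $1\leq i\leq n-1$, and set
\[
N_\gamma := \bigcap_{i=1}^{n}\langle [t_{i-1},t_i],V_i\rangle \,\cap\, \bigcap_{i=1}^{n-1}\langle\{t_i\},W_i\rangle,
\]
an open neighborhood of $\gamma$. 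For $\delta\in N_\gamma$, I would construct a homotopy $\delta\simeq\gamma$ as follows: take a path $p_i$ in $W_i$ from $\gamma(t_i)$ to $\delta(t_i)$ for $1\leq i\leq n-1$, with $p_0$ and $p_n$ constant since $\gamma$ and $\delta$ are both based at $x_0$. Then $\gamma|_{[t_{i-1},t_i]}*p_i*\overline{\delta|_{[t_{i-1},t_i]}}*\overline{p_{i-1}}$ is a loop in $V_i$ and hence null-homotopic in $X$. Splicing these local null-homotopies shows $\gamma\simeq\delta$, so $\delta\in N$, and therefore $N$ is open.

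The main obstacle is the second direction, specifically the choice of $N_\gamma$. Requiring only $\delta([t_{i-1},t_i])\subseteq V_i$ is insufficient, because the endpoints $\delta(t_i)$ could land anywhere in $V_i\cap V_{i+1}$ and one could not then contract the connecting loops inside a single good set. The additional constraints $\langle\{t_i\},W_i\rangle$, forcing $\delta(t_i)$ into the common path-connected refinement $W_i$, are what allow the conjugating paths $p_i$ to exist inside each $V_i$ and the piecewise homotopy to assemble. Once this neighborhood is correctly set up, the rest is a routine calculation with path classes.
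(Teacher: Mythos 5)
Your proof is correct, and since the paper states this result only as a citation to Brazas [Br1] without reproducing a proof, there is nothing in the paper to diverge from: your argument (discreteness $\Leftrightarrow$ openness of the identity via translation homeomorphisms, the compact-open neighborhood of the constant loop for one direction, and the Lebesgue-subdivision neighborhood $N_\gamma$ with the endpoint constraints $\langle\{t_i\},W_i\rangle$ for the other, plus basepoint transport to get SLSC at every point) is essentially the standard proof found in the cited sources. No gaps.
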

\begin{theorem} \cite{T}
Let $A_1,A_2,...,A_n$ be disjoint path connected, closed subsets of a first countable, connected, locally path connected space $X$ such that $X/(A_1,A_2,...,A_n)$ is semi-locally simply connected, then for each $a\in\bigcup_{i=1}^nA_i$, $p_*:\pi_1(X,a)\lo\pi_1(X/(A_1,A_2,...,A_n,*)$ is an epimorphism.
 \end{theorem}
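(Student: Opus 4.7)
The plan is to take an arbitrary loop $\gamma: I\to Y:=X/(A_1,\ldots,A_n)$ based at $*=p(a)$ and construct a loop $\alpha: I\to X$ at $a$ with $p_*[\alpha]=[\gamma]$. Write $a_i^*=p(A_i)$ and $C=\{a_1^*,\ldots,a_n^*\}$. Two facts drive the argument: the restriction $p: X\setminus\bigcup_iA_i\to Y\setminus C$ is a homeomorphism, so paths in $Y$ avoiding $C$ lift uniquely; and by the hypothesised semi-local simple connectedness of $Y$, I can choose pairwise disjoint open sets $W_i\ni a_i^*$ such that every loop in $W_i$ based at $a_i^*$ is null-homotopic in $Y$. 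Path-connectedness of each $A_i$ and first countability of $X$ will be used to control lifts near $\bigcup_iA_i$.

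I would first produce a finite partition $0=s_0<s_1<\cdots<s_N=1$ of $I$ in which each subinterval is either \emph{bad}---contained in $\gamma^{-1}(W_i)$ for some $i$, with both endpoints sent to $a_i^*$---or \emph{good}, with its interior mapping into $Y\setminus C$. Apply the Lebesgue-number lemma to the open cover $\{\gamma^{-1}(Y\setminus C)\}\cup\{\gamma^{-1}(W_i)\}_{i=1}^n$ of $I$, then refine by shrinking each subinterval that meets $\gamma^{-1}(C)$ until its endpoints lie in $\gamma^{-1}(a_i^*)$; this is possible because $\gamma^{-1}(a_i^*)$ is compact and closed in the open set $\gamma^{-1}(W_i)$, so its infimum and supremum within each such subinterval are attained. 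On each bad piece $[s_k,s_{k+1}]$ I set $\alpha$ equal to the constant path at a fixed basepoint $x_i\in A_i$; on each good piece, I lift $\gamma|_{(s_k,s_{k+1})}$ uniquely via the homeomorphism to a path in $X\setminus\bigcup_iA_i$, and at an endpoint where $\gamma(s_k)=a_i^*$ I bridge from the lift's limit value (which should lie in $A_i$) to $x_i$ by a path inside $A_i$, using path-connectedness of $A_i$ and absorbing the bridge into the adjacent bad piece so that $p\circ\alpha$ remains constant at $a_i^*$ throughout the bad/bridge region. Finally, I prepend and append short paths inside $A_1\ni a$ at $t=0,1$ to make $\alpha$ a loop based at $a$.

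The homotopy $p\circ\alpha\simeq\gamma$ is then assembled piecewise: on a good piece outside the trimmed endpoint-bridge region, $p\circ\alpha$ equals $\gamma$; on a bad piece together with the absorbed bridges, $p\circ\alpha$ is the constant loop at $a_i^*$, while $\gamma$ restricted there is a loop at $a_i^*$ entirely inside $W_i$, hence null-homotopic in $Y$ by the defining property of $W_i$. Concatenating these piecewise null-homotopies gives a global homotopy $p\circ\alpha\simeq\gamma$, so $p_*[\alpha]=[\gamma]$. The principal obstacle is the good-to-bad interface: the uniquely defined lift of a good piece into $X\setminus\bigcup_iA_i$ need not extend continuously to a point where $\gamma(s_k)=a_i^*$, so one must show, using first countability of $X$ together with compactness and closedness of $A_i$, that the lift does accumulate at some point of $A_i$ closely enough to accommodate the $A_i$-bridge as a continuous attachment; this subtlety is exactly what the first-countability hypothesis is there to supply.
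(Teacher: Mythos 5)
The paper itself supplies no proof of this statement: it is quoted from \cite{T}, and the intended derivation is visible from the surrounding text. In \cite{T} the authors prove that the image of $p_*$ is \emph{dense} in $\pi_1^{qtop}(X/(A_1,\dots,A_n),*)$ under exactly the hypotheses on $X$ and the $A_i$ listed here; since $X/(A_1,\dots,A_n)$ is connected and locally path connected, Theorem 2.3 says that semi-local simple connectivity makes $\pi_1^{qtop}(X/(A_1,\dots,A_n),*)$ discrete, and a dense subgroup of a discrete group is the whole group. Your proposal instead attempts a direct lift-and-bridge construction, which is a genuinely different route, and it founders on the very point you flag at the end.

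The gap is the continuity of $\alpha$ at a good-to-bad interface, and your proposed repair does not close it. If $\gamma(s_k)=a_i^*$, the unique lift $\wt\gamma$ of a good piece into $X\setminus\bigcup_iA_i$ need not have a one-sided limit as $t\to s_k$ (it can oscillate along $A_i$ even when $\gamma$ is a tame arc in the quotient); a map with no one-sided limit at $s_k$ cannot be extended continuously there by \emph{any} choice of bridge in $A_i$, so ``accumulates at a point of $A_i$'' is strictly weaker than what you need. Worse, the compactness of $A_i$ you invoke is not a hypothesis of this theorem --- the $A_i$ are only assumed closed and path connected. The present paper shows how delicate this interface really is: even in the compact, locally 1-connected setting, repairing a non-convergent lift requires Lemma 3.2, i.e.\ truncating along a nested neighborhood basis with the correcting null-homotopies confined to the shrinking neighborhoods $V_i$, and Example 3.3 (the Griffiths space) shows the homotopy class can change if those null-homotopies are only available in the ambient space --- which is all your $W_i$ give you. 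None of that machinery exists under the hypotheses of Theorem 2.4, which is precisely why \cite{T} proves only density of the image and lets discreteness (Theorem 2.3) upgrade it to surjectivity. To salvage your argument you would either have to show that every class in $\pi_1(X/(A_1,\dots,A_n),*)$ has a representative whose good-piece lifts converge at each interface (a substantial claim that is essentially the heart of the matter), or abandon the direct construction in favor of the density-plus-discreteness route.
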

 In this note all homotopies are relative. Also, when $\al:[a,b]\lo X$ is a path, for brevity by $\hat{\al}$ we mean $\al\circ\vf$ where $\vf:I\lo [a,b]$ is a suitable linear homeomorphism.
 \newpage
\section{Constructing homotopies}
This section is dedicated to some technical lemmas about homotopy properties of loops.

 Assume that $\al$ is a path in $X$ and $t_0,t_1,...,t_n$ are real numbers such that $0=t_0<t_1<...<t_n=1$. Let $\al_i=\al|_{[t_i,t_{i+1}]}$, for $0\leq i\leq n-1$. Then we know that (\cite[Theorem 51.3]{M}) $$\al\simeq\hat{\al}_0*\hat{\al}_1*...*\hat{\al}_n.$$
\begin{corollary}
 Let $\al$ be a path in $X$ and $I_1,I_2,...I_n$ be closed disjoint subintervals of $I$. If $\al_j=\al|_{I_j}$ and $\bt_j:I_j\lo X$ are homotopic relative to end points for $j=1,...,n$, then $\al\simeq\bt$, where
 \begin{displaymath}
{\bt}(t)= \left\{
\begin{array}{lr}
\bt_j(t)   &       t\in I_j \\
\al(t)        &       otherwise. \\

\end{array}
\right.
\end{displaymath}
 \end{corollary}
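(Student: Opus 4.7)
The plan is to reduce to the subdivision identity $\al\simeq\hat{\al}_0*\hat{\al}_1*\cdots*\hat{\al}_n$ recalled just above the corollary, and then exploit the fact that path-homotopy rel endpoints is compatible with concatenation and with reparameterisation by a linear homeomorphism.

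First I would list the subintervals in the order they appear on $I$, so that $I_j=[a_j,b_j]$ with $b_j\leq a_{j+1}$. Interlacing them with the (possibly degenerate) closed gaps $J_0=[0,a_1]$, $J_k=[b_k,a_{k+1}]$ for $1\leq k\leq n-1$, and $J_n=[b_n,1]$ yields a finite partition of $I$ whose pieces are exactly the $J_k$'s and the $I_j$'s. Writing $\gamma_k=\al|_{J_k}$, the cited Theorem 51.3 of \cite{M} applied to $\al$ gives
\[
\al\;\simeq\;\hat{\gamma}_0*\hat{\al}_1*\hat{\gamma}_1*\cdots*\hat{\al}_n*\hat{\gamma}_n,
\]
and the same subdivision applied to $\bt$ gives
\[
\bt\;\simeq\;\hat{\gamma}_0*\hat{\bt}_1*\hat{\gamma}_1*\cdots*\hat{\bt}_n*\hat{\gamma}_n,
\]
because by the very definition of $\bt$, it coincides with $\al$ on each gap $J_k$.

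Next I would transport each homotopy $\al_j\simeq\bt_j$, which lives on $I_j\times I$, back to a homotopy between the rescaled paths $\hat{\al}_j$ and $\hat{\bt}_j$ on $I\times I$: if $H_j:I_j\times I\to X$ is the given relative homotopy and $\vf_j:I\to I_j$ is the linear homeomorphism built into the hat notation, then $H_j\circ(\vf_j\times\mathrm{id}_I)$ does the job rel endpoints. Since concatenation of paths respects homotopy rel endpoints in each factor, combining the $n$ homotopies $\hat{\al}_j\simeq\hat{\bt}_j$ while keeping each $\hat{\gamma}_k$ fixed produces
\[
\hat{\gamma}_0*\hat{\al}_1*\hat{\gamma}_1*\cdots*\hat{\al}_n*\hat{\gamma}_n\;\simeq\;\hat{\gamma}_0*\hat{\bt}_1*\hat{\gamma}_1*\cdots*\hat{\bt}_n*\hat{\gamma}_n,
\]
and transitivity of $\simeq$ then yields $\al\simeq\bt$.

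I do not expect a genuine obstacle here: all three ingredients (the subdivision lemma, the stability of $\simeq$ under reparameterisation, and the compatibility of $\simeq$ with $*$) are standard. The only mildly delicate point is the bookkeeping for degenerate gaps (those where $b_k=a_{k+1}$, or where $0=a_1$ or $b_n=1$); these should simply be dropped from the partition so that the subdivision formula applies cleanly, rather than generating constant paths in the concatenation.
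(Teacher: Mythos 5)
Your proposal is correct and follows exactly the route the paper intends: the corollary is stated immediately after the subdivision identity $\al\simeq\hat{\al}_0*\hat{\al}_1*\cdots*\hat{\al}_n$ from \cite[Theorem 51.3]{M} and is left without an explicit proof precisely because it reduces to interlacing the $I_j$'s with the gaps, replacing each $\hat{\al}_j$ by $\hat{\bt}_j$ factorwise, and using compatibility of $\simeq$ with concatenation and linear reparameterisation. Your added care about ordering the intervals and discarding degenerate gaps is sound bookkeeping that the paper leaves implicit.
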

  If $A$ is a subset of a topological space $X$, then we denote the complement of $A$ in $X$ by $A^c=X-A$.
\begin{lemma}
Let $\al$ be a loop in $X$ based at ${x_0}$. Let $\{t_i\}\sub (0,1)$ be an increasing sequence such that $t_i\rightarrow 1$ and $\{V_i\}$ form a nested neighborhood basis at $x_0$ such that $\al(t_i)\in V_i$ for every integer $i$. If $\bt_i:[t_i,1]\lo V_i$ is a path from $\al(t_i)$ to $x_0$ such that $\hat{\ov{\bt}}_i*\hat{\al}|_{[t_i,t_{i+1}]}*\hat{\bt}_{i+1}$ is a null loop in $V_i$, for every integer $i$, then $\al\simeq\al_i$, where
  \begin{displaymath}
{\al_i}(t)= \left\{
\begin{array}{lr}
\al(t)   &       t\leq t_i \\
\bt_i(t)        &      t\geq t_i. \\

\end{array}
\right.
\end{displaymath}
\end{lemma}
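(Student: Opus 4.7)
The strategy is to construct a relative homotopy $H:I\times I\lo X$ from $\al$ to $\al_i$ by splicing together countably many small homotopies, one in each $V_j$ for $j\geq i$, and using the nestedness of $\{V_j\}$ to ensure continuity at the limiting line $s=0$. First, introduce interpolating loops $\gamma_j$ for $j\geq i$ given by $\gamma_j(r)=\al(r)$ on $[0,t_j]$ and $\gamma_j(r)=\bt_j(r)$ on $[t_j,1]$; in particular $\gamma_i=\al_i$, and $\gamma_j$ coincides with $\al$ on the growing subinterval $[0,t_j]$. After concatenating with $\bt_j$ and a standard reparametrization, the null-homotopy hypothesis yields, for each $j$, a rel-endpoints homotopy $F_j:[t_j,1]\times I\lo V_j$ from the path on $[t_j,1]$ which is $\al$ on $[t_j,t_{j+1}]$ followed by $\bt_{j+1}$ on $[t_{j+1},1]$ to $\bt_j$ itself. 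Extending $F_j$ by the constant-in-$s$ value $\al(r)$ for $r\in[0,t_j]$ and invoking Corollary 3.1 on the single subinterval $[t_j,1]$ produces a full relative homotopy $\gamma_{j+1}\simeq\gamma_j$ whose image on $[t_j,1]$ lies entirely in $V_j$.

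Next, pick a decreasing sequence $1=\sigma_i>\sigma_{i+1}>\ldots$ in $I$ with $\sigma_j\to 0$, and splice the $F_j$'s together on the slabs $I\times[\sigma_{j+1},\sigma_j]$ by a linear rescaling of the $s$-coordinate, arranging $H(\cdot,\sigma_j)=\gamma_j$ for each $j\geq i$; complete the definition by $H(r,0)=\al(r)$. Then $H$ is continuous on $I\times(0,1]$: adjacent slabs glue at $s=\sigma_j$ because both realize $\gamma_j$; on $[0,t_i]\times I$ the map equals $\al$; and on the two vertical edges $r=0$ and $r=1$ it is constantly $x_0$.

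The key step, and the only place the nested basis hypothesis is really used, is continuity of $H$ along the line $s=0$. For a point $(r_0,0)$ with $r_0<1$, fix $j_0$ with $t_{j_0}>r_0$: for $(r,s)$ sufficiently close to $(r_0,0)$, either $s=0$ or $s\in(\sigma_{j+1},\sigma_j]$ for some $j\geq j_0$, and in either case $r<t_j$ forces $H(r,s)=\al(r)$, so continuity follows from continuity of $\al$. At the corner $(1,0)$, given any open $U\ni x_0$, choose $j_1$ with $V_{j_1}\sub U$ and $\delta>0$ with $\al([1-\delta,1])\sub U$; then for $(r,s)$ with $1-r<\delta$ and $s\leq\sigma_{j_1}$, either $r\geq t_j$ with $j\geq j_1$ and so $H(r,s)\in V_j\sub V_{j_1}\sub U$, or $r<t_j$ and $H(r,s)=\al(r)\in U$. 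Thus $H$ is continuous on $I\times I$ and realizes $\al\simeq\al_i$.
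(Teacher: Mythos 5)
Your proof is correct, but it organizes the homotopy square genuinely differently from the paper. The paper starts from the constant homotopy $F(t,s)=\al_1(t)$ and modifies it on a nested sequence of triangles $B_n$ shrinking to the single corner $(1,0)$: each stage produces a map $H_n:B_n\lo V_n$ null-homotoping $\eta_n*\hat{\al}|_{[t_n,t_{n+1}]}*\hat{\bt}_{n+1}$, where the auxiliary path $\eta_n$ is extracted \emph{recursively} as the restriction of $H_{n-1}$ to the hypotenuse of $B_n$, and a parameter $\e_n$ must be chosen at each stage so that $H_{n-1}(B_n)\sub V_n$; the final homotopy is $F$ outside $B_1$ and $H_n$ on the annular pieces $B_n-B_{n+1}$. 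You instead stack independent homotopies $\gamma_{j+1}\simeq\gamma_j$ on horizontal slabs $I\times[\sigma_{j+1},\sigma_j]$, using the intermediate loops $\gamma_j=\al_j$ as explicit interfaces along which consecutive slabs agree; no recursion and no auxiliary hypotenuse paths are needed, since each slab homotopy comes straight from the hypothesis that $\hat{\ov{\bt}}_j*\hat{\al}|_{[t_j,t_{j+1}]}*\hat{\bt}_{j+1}$ is null in $V_j$ (via the standard cancellation $v\simeq \bt_j*(\ov{\bt}_j*v)\simeq\bt_j$ inside $V_j$). The price is that your locus of potential discontinuity is the whole edge $s=0$ rather than one corner, but as you correctly observe the map is locally $\al(r)$, independent of $s$, near every $(r_0,0)$ with $r_0<1$, so the only delicate point is again $(1,0)$, where both arguments invoke the nestedness of $\{V_j\}$ in the same way. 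Your version is the more elementary and easier to verify; the paper's construction buys a homotopy that is literally constant outside an arbitrarily small neighborhood of the corner. Both are complete proofs.
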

\begin{proof}
It is obvious that $\al_i\simeq\al_j$, for every integers $i,j$ and hence it suffices to prove $\al\simeq\al_1$ .
Let $\al_1\simeq\al_1$ by $F(t,s)=\al_1(t)$. Since $([t_1,1]\times\{0\})\cup(\{1\}\times I)$ is compact and $V_1$ is an open neighborhood of
 $F([t_1,1]\times\{0\}\cup\{1\}\times I)$, there exist $\e_1\in (0,1)$ such that $F(B_1)\sub V_1$, where $B_1=\{(x,y)\in I\times I\ |\ (1-t_1)y\leq\e_1 (x-t_1)\}$. Let $\eta_1:I\lo V_1$ be the path defined by $\eta_1(t)=F(t(t_1,0)+(1-t)(1,\e_1))$. By definition of $\eta_1$ and since $F(t,0)=\bt_1(t)$ for $t\geq t_1$, $\eta_1*\hat{\bt}_1\simeq e_{x_0}$ by a homotopy in $V_1$. Therefore
$$\eta_1*\hat{\al}|_{[t_1,t_2]}*\hat{\bt}_2\simeq\eta_1*\hat{\bt}_1*\hat{\ov{\bt}}_1*\hat{\al}|_{[t_1,t_2]}*\hat{\bt}_2\simeq
 e_{x_0}$$ by a homotopy in $V_1$
and hence there exists a continuous map $H_1:B_1\lo V_1$ such that \\
(i) $H_1(t(t_1,0)+(1-t)(1,\e_1))=\eta_1(t)$ for $t\in I$,\\
(ii) $H_1(t,0)=\al(t)$ for $t\in[t_1,t_2]$,\\
(iii) $H_1(t,0)=\bt_2(t)$ for $t\geq t_2$,\\
(iv) $H_1(1,t)=x_0$.\\
 Since $([t_2,1]\times\{0\})\cup(\{1\}\times I)$ is compact and $V_2$ is an open neighborhood of
 $H_1([t_2,1]\times\{0\}\cup\{1\}\times I)$, there exist $\e_2<\e_1\in (0,1)$ such that $H_1(B_2)\sub V_2$, where $B_2=\{(x,y)\in I\times I\ |\ (1-t_2)y\leq\e_2 (x-t_2)\}$. Let $\eta_2:I\lo V_2$ be the path defined by $\eta_2(t)=H_1(t(t_2,0)+(1-t)(1,\e_2))$. Then $\eta_1*\hat{\al}|_{[t_1,t_2]}*\eta_2^{-1}$ is null in $V_1$ by the homotopy $K_1=H_1|_{B_1-B_2}$(note that $B_1-B_2$ is homeomorphic to $I\times I$).
 Also, $\eta_2*\hat{\bt}_2$ is null in $V_2$ because $H_1(B_2)\sub V_2$ and hence
  $\eta_2*\hat{\al}|_{[t_2,t_3]}*\hat{\bt}_3\simeq e_{x_0}$ by homotopy $H_2:B_2\lo V_2$. By the similar way, for every $n\in\N$ we have the path $\eta_n:I\lo V_n$ by $\eta_n(t)=H_{n-1}(t(t_n,0)+(1-t)(1,\e_n))$ for which $\eta_{n-1}*\al|_{[t_{n-1},t_n]}*\eta_n^{-1}$ is null in $V_n$ by a homotopy $K_n=H_n|_{B_n-B_{n+1}}$, where $B_n=\{(x,y)\in I\times I\ |\ (1-t_n)y\leq\e_n (x-t_n)\}$ and $\varepsilon_n\rightarrow 0$ (see Figure 1).

 Define $G:I\times I\lo X$ by
\begin{displaymath}
{G}(t,s)= \left\{
\begin{array}{lr}
K_n(t,s)   &       (t,s)\in B_n-B_{n+1} \\
F(t,s)   &       o.w.\\

\end{array}
\right.
\end{displaymath}
Then $G(t,0)=\al(t)$ and $G(t,1)=\al_1(t)$. Let $(r_n,s_n)\rightarrow (1,0)$ and $U$ be an open neighborhood of $x_0$. There exist $n_0\in\N$ such that $V_{n_0}\sub U$. By the construction of $B_n$'s, there exist $n_1>n_0$ such that $(r_n,s_n)\in B_{n_1}$ for each $n>n_1$ which implies that
 $G(r_n,s_n)\in V_{n_1}\sub U$ for each $n>n_1$. Hence $G$ is continuous(preserving the other convergent sequences by $G$ is obvious).

\begin{figure}
\begin{center}
 \includegraphics[scale=1]{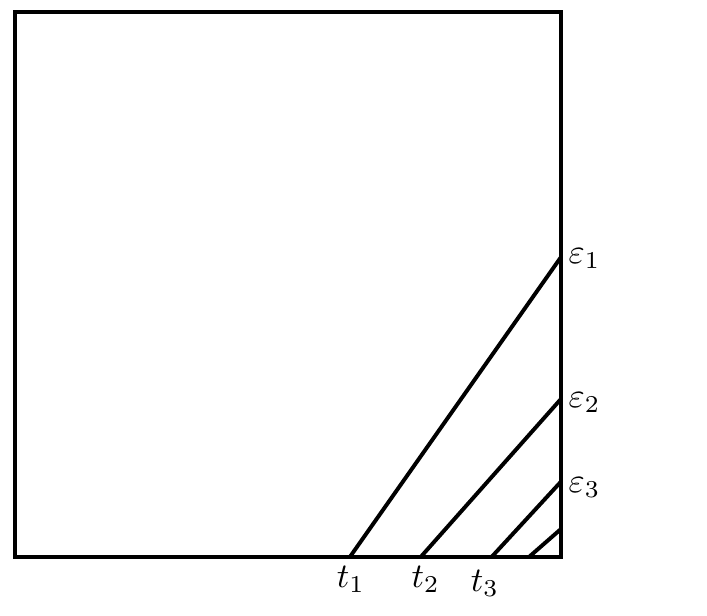}
  \caption{}\label{1}
\end{center}
\end{figure}

\end{proof}
In the following examples we show that the above lemma does not hold if$\ov{\bt}_i*\hat{\al}|_{[t_i,t_{i+1}]}*\bt_{i+1}$ is only null in $X$.
\begin{example} Let $X$ be the Griffiths' space: the one-point union $C(HE)\vee C(HE)$ of two copies of the cone over the Hawaiian earring as described in \cite{Gr}. Let $\theta_i$'s and $\lambda_i$'s be the consecutive loops of the right and left Hawaiian
earring, respectively. Then all loops $\theta_i$ and $\lambda_i$ are null by homotopies in $X$. Let $I_j=[1-\frac{1}{2^{j-1}},1-\frac{1}{2^j}]$, for every $j\in\N$ and define $\al:I\lo X$ by
 \begin{displaymath}
{\al}(t)= \left\{
\begin{array}{lr}
\theta_j\circ\varphi_{2j-1}(t)   &       t\in I_{2j-1}, \\
\lambda_j\circ\varphi_{2j}(t)   &        t\in I_{2j},\\

\end{array}
\right.
\end{displaymath}
where $\varphi_j$'s are suitable linear homeomorphisms from $I_j$ to $I$. Note that $\al$ is not null in $X$ \cite[Example 2.5.18]{S}. Let $t_i=1-\frac{1}{2^{i-1}}$, $\bt_{2i-1}=\theta_i\circ\varphi_{2i-1}$ and $\bt_{2i}=\lambda_i\circ\varphi_{2i}$. Then $\ov{\bt}_i*\hat{\al}|_{[t_i,t_{i+1}]}*\bt_{i+1}$ is homotopic to a null loop $\theta_j$ or $\lambda_j$ (depended on $i$ is even or odd). Also, every loop $\al_i$ defined as like as in Lemma 3.3 is null and hence $\al$ is not homotopic to $\al_i$, for every $i\in\N$.

\end{example}
\begin{lemma}
Let $\al$ be a loop in $X$ based at $x_0$, $\{x_0\}$ be closed and $\{V_i\}$ form a nested neighborhood basis at $x_0$. If $\al^{-1}(\{x_0\}^c)=\bigsqcup_{n=1}^{\infty} (a_n,b_n)$ and $\hat{\al}|_{[a_n,b_n]}\simeq e_{x_0}$ by a homotopy in $V_n$ ($\al([a_n,b_n])\sub V_n$), then $\al$ is nullhomotopic.
\end{lemma}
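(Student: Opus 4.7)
My approach would be to paste together the given null-homotopies on the intervals $[a_n,b_n]$ with the constant map $x_0$ on the complementary closed set, and then to verify continuity via the nested-basis condition on $\{V_n\}$.

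First I would reparametrize each given homotopy: the hypothesis yields $H_n \colon I \times I \lo V_n$ with $H_n(\cdot,0)=\hat{\al}|_{[a_n,b_n]}$, $H_n(\cdot,1)\equiv x_0$, and $H_n(0,s)=H_n(1,s)=x_0$. Pulling back by the linear $\vf_n \colon [a_n,b_n] \lo I$ gives $\tilde H_n \colon [a_n,b_n] \times I \lo V_n$ whose bottom edge is $\al|_{[a_n,b_n]}$ and whose other three edges are constantly $x_0$. I would then define
\[
F(t,s) = \begin{cases} \tilde H_n(t,s), & t \in [a_n,b_n] \text{ for some } n,\\ x_0, & \text{otherwise.}\end{cases}
\]
The two clauses agree on the overlap $\{a_n,b_n\}$, because the $(a_n,b_n)$ are the connected components of the open set $\al^{-1}(\{x_0\}^c)$ (open since $\{x_0\}$ is closed), forcing $\al(a_n)=\al(b_n)=x_0$. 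Hence $F$ is well defined, with $F(\cdot,0)=\al$, $F(\cdot,1)\equiv x_0$, and $F(\{0,1\}\times I)=\{x_0\}$ immediate.

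The substance of the proof, which I expect to be the main obstacle, is continuity of $F$. On any open rectangle $(a_n,b_n)\times I$, $F=\tilde H_n$ is continuous. At a point $(t_0,s_0)$ with $\al(t_0)=x_0$, the argument is delicate because $t_0$ may be a limit of endpoints of infinitely many intervals, so one cannot simply shrink away from them. Given an open neighborhood $U\ni x_0$, I would pick $i_0$ with $V_{i_0}\sub U$; the nested basis then forces $\tilde H_n([a_n,b_n]\times I)\sub V_n\sub U$ for every $n\geq i_0$, so only the finitely many ``bad'' intervals $(a_1,b_1),\ldots,(a_{i_0-1},b_{i_0-1})$ need individual treatment. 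For each bad $n$ with $t_0\in\{a_n,b_n\}$, the tube lemma applied to $\tilde H_n^{-1}(U)\supseteq\{t_0\}\times I$ supplies a one-sided strip of positive width adjacent to $t_0$ landing in $U$; each remaining bad $n$ contributes the positive distance $\mathrm{dist}(t_0,[a_n,b_n])$. Taking $\delta$ to be the minimum of these finitely many positive quantities would yield a neighborhood $(t_0-\delta,t_0+\delta)\times I$ of $(t_0,s_0)$ mapped into $U$, completing the verification. The crux is that the indexing convention ``the $n$-th homotopy lives in $V_n$,'' together with the nested basis, reduces an otherwise uncontrollable verification near limit points to handling only finitely many intervals at a time.
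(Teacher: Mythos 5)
Your proposal is correct and follows essentially the same route as the paper: paste the given null-homotopies on the intervals $[a_n,b_n]$, extend by the constant $x_0$ elsewhere, and use the nested property of $\{V_n\}$ to reduce the continuity check at points of $\al^{-1}(x_0)$ to finitely many intervals. Your neighborhood/tube-lemma verification of continuity is in fact more detailed than the paper's brief sequence-based argument, but the underlying idea is identical.
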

\begin{proof}
Let $I_n=[a_n,b_n]$ and $\al_n=\al|_{I_n}$. Assume $\hat{\al}_n\simeq e_{x_0}$ by a homotopy $F_n:I_n\times I\lo V_n$.
 Define $F:I\times I\lo V_1$ by\\
\begin{displaymath}
{F}(t,s)= \left\{
\begin{array}{lr}
F_n(t,s)   &       t\in I_n,\ \ for\ n\in\N \\
{x_0}   &  otherwise \\

\end{array}
\right.
\end{displaymath}
Since $F(t,0)=\al(t)$ and $F(t,1)=x_0$, it remains to show that $F$ is continuous. Let $(t_n,s_n)\rightarrow (t,s)$.
If $F(t,s)\neq x_0$, then $t_n\in(a_k,b_k)$, for a $k\in\N$ and every $n>k$. Continuity of $F_k$ implies that $F(t_n,s_n)=F_k(t_n,s_n)\rightarrow F_k(t,s)=F(t,s)$. If $F(t,s)=x_0$, then the construction of $F$, $F_n$'s and nested property of $V_n$'s imply
that $F(t_n,s_n)\in U$, for an open neighborhood $U$ of $x_0$ and sufficiently large $n$.
\end{proof}
By the next example we show that Lemma 3.4 does not hold if $\hat{\al}|_{[a_n,b_n]}$ is a null loop in $X$ instead of $V_n$.
\begin{example} Let $X$, $\al$, $\theta_i$ and $\lambda_i$ be defined as in Example 3.3 and $x_0$ be the common point of two cones. We have all the hypothesis of the lemma except $\theta_i$ and $\lambda_i$ are null in $X$ and $\al$ is not null.

\end{example}
\newpage
\section{The main result}
 This section dedicated to prove Theorem $A$ by introducing notions $\pi_1$-contained for subspaces of a given space $X$ and nested basis for a subspace $A\sub X$.
\begin{definition}
Let $X$ be a topological space, $Z,K$ be path connected subset of $Y\sub X$. We say that $Z$ is $\pi_1$-contained in $K$ at $Y$ if $i_*\pi_1(Z,x)\leq j_*\pi_1(K,x)$ for every $x\in Z\cap K$, where $i:Z\lo Y,\ j:K\lo Y$ are inclusion maps. Equivalently, every loop in $Z$ is homotopic to a loop in $K$ by a relative homotopy in $Y$.
\end{definition}
For a topological space $X$ and open subsets $G'\sub G\sub X$, we say $G'$ is simply connected in $G$ if every loop in $G'$ is nullhomotopic in $G$. If $A$ is a subset of a topological space $X$, then we denote the boundary of $A$ in $X$ by $\partial A$. Also, in a metric space $(X,d)$, by $S_r(a)$ we mean the open ball $\{x\in X|d(x,a)<r\}$.
\begin{lemma}
Let $X$ be a metric space and $A$ be a locally path connected, closed subset of $X$. Let $U\sub X$ be an open neighborhood of $a\in \partial A$  which is simply connected in an open subset $W$ of $X$. Then there exists an open neighborhood $V$ of the point $a$ such that $V\sub U$ and $A\cup V$ is $\pi_1$-contained in $A$ at $A\cup W$.
\end{lemma}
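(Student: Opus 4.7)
The plan is to choose $V$ to be a small open metric ball $S_\delta(a)$ whose closure lies inside an open set $O\sub U$ with $A\cap O$ path connected. Then, given any loop $\alpha$ in $A\cup V$ based at $x\in A$, I would replace each maximal excursion of $\alpha$ into $V\setminus A$ by a path inside $A\cap O$ to produce a loop $\beta$ valued in $A$, and build a homotopy $\alpha\simeq\beta$ in $A\cup W$ using the hypothesis that $U$ is simply connected in $W$.

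In more detail, since $A$ is locally path connected and $a\in A$, I first find such an $O\sub U$ with $A\cap O$ path connected; because $X$ is metric, I then pick $\delta>0$ with $\ov{S_\delta(a)}\sub O$ and set $V:=S_\delta(a)$. For any loop $\alpha$ in $A\cup V$ based at $x\in (A\cup V)\cap A=A$, the closedness of $A$ makes $\alpha^{-1}(V\setminus A)=\alpha^{-1}(X\setminus A)$ an open subset of $I$, hence a countable disjoint union $\bigsqcup_n(a_n,b_n)$; the endpoints satisfy $\alpha(a_n),\alpha(b_n)\in A\cap\ov V\sub A\cap O$. Path connectedness of $A\cap O$ supplies a path $\gamma_n$ in $A\cap O$ from $\alpha(a_n)$ to $\alpha(b_n)$. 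Define $\beta$ by $\beta|_{[a_n,b_n]}=\gamma_n$ and $\beta=\alpha$ otherwise. Each small loop $\hat{\alpha}|_{[a_n,b_n]}*\hat{\ov{\gamma}}_n$ has image in $V\cup(A\cap O)\sub U$, so by hypothesis it admits a null-homotopy $F_n:[a_n,b_n]\times I\lo W$. Gluing the $F_n$'s with the constant-in-$s$ extension $\alpha(t)$ on the complement gives a candidate map $F:I\times I\lo A\cup W$ with $F(\cdot,0)=\alpha$ and $F(\cdot,1)=\beta$, so $\alpha\simeq\beta$ in $A\cup W$ provided $\beta$ and $F$ are continuous.

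The principal obstacle is precisely this continuity at accumulation points $t^*$ of $\bigsqcup_n(a_n,b_n)$ lying outside the intervals themselves. Any such $t^*$ satisfies $\alpha(t^*)\in\ov V\cap A\sub O$, so only local control near these points is required, and it is available because $A$ is locally path connected there. I would accordingly refine the choice of $\gamma_n$ (and of the accompanying null-homotopies $F_n$) using a nested basis of shrinking path-connected neighborhoods of $\alpha(t^*)$ inside $A\cap O$, together with the uniform continuity of $\alpha$ on the compact interval $I$, so that $\gamma_n$ and $F_n$ have diameter comparable to that of $\alpha([a_n,b_n])$. A convergence argument in the spirit of Lemma 3.4 then yields continuity of $\beta$ and $F$ at every such $t^*$; the basepoint condition is automatic since $\beta(0)=\alpha(0)=x=\alpha(1)=\beta(1)$.
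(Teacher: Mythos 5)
Your setup---the ball $V=S_r(a)$ with $\ov{V}\sub V'\sub U$ and $V'\cap A$ path connected, the decomposition $\al^{-1}(A^c)=\bigsqcup_n(a_n,b_n)$, and the replacement of each excursion by a path in $V'\cap A$---coincides with the paper's, and your treatment of finitely many excursions is correct. The gap is in the infinite case. You propose to glue infinitely many null-homotopies $F_n$ and to recover continuity at accumulation points ``in the spirit of Lemma 3.4'' by choosing the $\gamma_n$ and $F_n$ to have small diameter. But the only source of the $F_n$ is the hypothesis that $U$ is simply connected in $W$, which asserts the existence of \emph{some} null-homotopy in $W$ and gives no control whatsoever on its image; you cannot arrange that $F_n$ has ``diameter comparable to that of $\al([a_n,b_n])$''. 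Lemma 3.4 works precisely because the null-homotopies are assumed to live in a shrinking neighborhood basis $V_n$ of a single point, and Examples 3.3 and 3.5 (the Griffiths space) show that without such control the glued map genuinely fails to be continuous. In the present lemma $X$ is only a metric space, so no local smallness of null-homotopies is available, and your $F$ need not be continuous at a point $(t^*,s)$ with $t^*$ an accumulation point of the excursion intervals and $0<s<1$. (A secondary issue: the set of such $t^*$ can be a Cantor set, so your ``nested basis of shrinking path-connected neighborhoods of $\al(t^*)$'' would have to be produced coherently at uncountably many points simultaneously; this is not spelled out either.)

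The paper's proof avoids infinite gluing altogether, and this is the idea you are missing. By uniform continuity of $\al$ there is $\delta>0$ such that all but finitely many excursions have length less than $\delta$ and hence image of small diameter. The finitely many long excursions are handled as in your finite case. The remaining short excursions are then grouped into finitely many consecutive blocks $[t_i,s_i]$ of length less than $\delta$: each block starts and ends in $A$, lies entirely in the small ball about $a$ (hence in $V'$, where $V'\cap A$ is path connected), and is replaced \emph{wholesale} by a single path $\la_i$ in $V'\cap A$, the homotopy taking place in $W$ because the concatenated loop $\hat{\al'}|_{[t_i,s_i]}*\ov{\la}_i$ lies in $V'\sub U$. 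Compactness of $I$ forces the number of blocks to be finite, so only finitely many replacements are performed and Corollary 3.1 applies; no continuity argument at accumulation points is ever needed. As written, your argument cannot be completed without either this finite-reduction step or an additional local 1-connectedness hypothesis on $X$ that the lemma does not assume.
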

\begin{proof}
Since $X$ is metric, there exists $U'\sub U$ such that $\ov{U'}\sub U$. Since $A$ is locally path connected, there exists a neighborhood $V'\sub U'$ of $a$ such that $V'\cap A$ is path connected. There exists $r>0$ such that the closed ball $B_r(a)$ is contained in $V'$. Let $V$ be the open ball $S_r(a)$, then we show that $A\cup V$ is $\pi_1$-contained in $A$ at $A\cup W$. Let $\al$ be a loop in $A\cup V$ with base point in $A$. If $(r,s)$ is a component of $\al^{-1}(V\cap A^c)=\al^{-1}(A^c)$, then $\al(r),\al(s)\in \ov{V}\cap A\sub V'\cap A$. Since $V'\cap A$ is path connected, there exists a path in $V'\cap A$ from $\al(r)$ to $\al(s)$.\\
Case 1:

Let $\al^{-1}(A^c)=\bigcup_{n=1}^N(a_n,b_n)$. If for every $1\leq n\leq N$, $\bt_n:[a_n,b_n]\lo V'\cap A$ is a path from $\al(a_n)$ to $\al(b_n)$, then $\al|_{[a_n,b_n]}\simeq\bt_n$ by a homotopy in $W$ since $\hat{\al}|_{[a_n,b_n]}*\hat{\ov{\bt}}_n$ is a loop in $U$. Using Corollary 3.1, $\al$ is homotopic to a loop in $A$ by a homotopy in $A\cup W$.\\
Case 2:

Let $\al^{-1}(A^c)=\bigcup_{n=1}^{\infty}(a_n,b_n)$. Since $\al$ is uniformly continuous, there exists $\delta>0$ such that if $|t-s|<\delta$, then $d(\al(t),\al(s))<r/2$. Also, there exists $n_0\in\N$ such that for every $n>n_0$, $b_n-a_n<\delta$. Similar to Case 1, there are paths $\bt_i:[a_i,b_i]\lo V'\cap A$ such that $\bt_i\simeq \al|_{[a_i,b_i]}$, for $1\leq i\leq n_0$ by homotopies in $W$. Therefore $\al$ is homotopic to a loop $\al':I\lo A\cup V$ by a homotopy in $A\cup W$ and $\al'^{-1}(A^c)=\bigcup_{n=n_0+1}^{\infty}(a_n,b_n)$.

Let $F=\{a_n|n>n_0\}$ and $E=\{b_n| n>n_0\}$. If $t_1=inf\{{\al'^{-1}(\ov{A^c})\cap F}\}$ and $s_1=sup\{{[t_1,t_1+\delta]\cap E}\}$, then by definition of $E$ and $F$, $t_1,s_1$ exist and $s_1-t_1<\delta$ which implies that $\al'|_{[t_1,s_1]}\sub S_r(a)\sub V'$. Since $V'\cap A$ is path connected, there exists a path $\la_1:I\lo V'\cap A$ with $\la_1(0)=\al'(t_1)$ and $\la_1(1)=\al'(s_1)$
 such that $\hat{\al'}|_{[t_1,s_1]}\simeq\la_1$ by a homotopy in $W$ because $\hat{\al'}|_{[t_1,s_1]}*\ov{\la_1}$ is a loop in $V'\sub U$.
 Now, let $t_n=inf\{{[s_{n-1},1]\cap\al'^{-1}(\ov{A^c})\cap F}\}$ and $s_n=sup\{{[t_n,t_n+\delta]\cap E}\}$. Similarly, $\al'([t_n,s_n])\sub S_r(a)\sub V'$ and there exists a path $\la_n:I\lo V'\cap A$ with $\la_n(0)=\al'(t_n)$ and $\la_n(1)=\al'(s_n)$ such that $\hat{\al'}|_{[t_n,s_n]}\simeq\la_n$ by a homotopy in $W$ because $\hat{\al'}|_{[t_n,s_n]}*\ov{\la_n}$ is a loop in $V'\sub U$.
Since $I\setminus\cup_{i\leq n_0}(a_i,b_i)$ is compact and $\delta>0$, there exists $n_1>0$ such that $[s_{n_1-1},1]\cap\al^{-1}(\ov{A^c})\cap F=\emptyset$. Hence, we can replace $\hat{\al'}|_{[t_i,s_i]}$, by the path $\la_i$, for $1\leq i\leq n_1$, with homotopies in $W$. Therefore $\al$ is homotopic to a loop in $A$ by a homotopy in $A\cup W$, as desired.
\end{proof}
\begin{definition}
For a topological space $X$, a family $\mathcal{U}=\{U_n\}_{n\in \N}$ of open neighborhoods of a subset $A$ is called a {\bf  nested basis of $A$} if $U_{n+1}\sub U_n$ for all $n\in\N$ and for every open set $G$ containing $A$, there is $n_0\in \N$ such that $U_{n_0}\sub G$. Let a nested basis $\{V_n\}_{n\in\N}$ of $A$ be a refinement of a nested basis $\{U_n\}_{n\in\N}$ of $A$ i.e $V_n\sub U_n$ for each $n$. Then we say that $\{V_n\}_{n\in\N}$ is $\pi_1$-refinement in $A$ at $\{U_n\}_{n\in\N}$ if $V_n$ is $\pi_1$-contained in $A$ at $U_n$ for each $n$.
\end{definition}
There is two natural questions that whether a subset of a topological space $X$ has a nested basis and whether every nested basis has a $\pi_1$-refinement? Obviously, compact subsets of metric spaces have nested basis and for the second question we have the following theorem. We recall that a topological space $X$ is said to be locally 1-connected if it is locally path connected and locally simply connected.
\begin{theorem}
Let $X$ be a locally 1-connected metric space and $A$ be a locally path connected compact subset of $X$. Then  every nested basis $\{U_n\}_{n\in \N}$ of $A$ has a $\pi_1$-refinement
in $A$ at $\{U_n\}_{n\in \N}$.
\end{theorem}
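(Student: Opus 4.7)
My plan is to localize Lemma 4.2 via the compactness of $\partial A$. Fix $n\in\N$. For each $a\in\partial A$, local 1-connectedness of $X$ furnishes open neighborhoods $U(a)\sub W(a)\sub U_n$ with $U(a)$ simply connected in $W(a)$, so Lemma 4.2 yields an open neighborhood $V(a)\sub U(a)$ of $a$ for which $A\cup V(a)$ is $\pi_1$-contained in $A$ at $A\cup W(a)\sub U_n$. Since $\partial A$ is closed in the compact set $A$, it is itself compact, so I may extract a finite subcover $V(a_1),\ldots,V(a_{k_n})$ of $\partial A$ and define the open neighborhood
$$V_n\,:=\,A^\circ\cup\bigcup_{i=1}^{k_n}V(a_i)\,\sub\,U_n$$
of $A$, where $A^\circ$ is the interior of $A$. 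To secure nestedness I would carry out the stage-$(n+1)$ construction inside the open neighborhood $U_{n+1}\cap V_n$ of $A$ instead of $U_{n+1}$, which forces $V_{n+1}\sub V_n$ automatically; the resulting family is then a nested basis of $A$ refining $\{U_n\}$.

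The substantive part of the proof is checking that each $V_n$ is $\pi_1$-contained in $A$ at $U_n$. Given a loop $\al:I\lo V_n$ based at $x_0\in A$, uniform continuity of $\al$ together with the Lebesgue number lemma applied to the open cover $\{\al^{-1}(A^\circ)\}\cup\{\al^{-1}(V(a_i))\}_i$ of $I$ gives a finite subdivision $0=s_0<s_1<\cdots<s_m=1$ with each $\al([s_{j-1},s_j])$ lying either in $A^\circ$ or entirely inside a single $V(a_{i_j})$. When a piece $\al|_{[s_{j-1},s_j]}$ lies in some $V(a_{i_j})$ and happens to have both endpoints in $A$, I close it to a loop in $A\cup V(a_{i_j})$ by appending any path in $A$ and invoke Lemma 4.2 to deform it, rel endpoints and inside $A\cup W(a_{i_j})\sub U_n$, to a path in $A$; Corollary 3.1 then glues these piecewise modifications (performed on disjoint subintervals) into one homotopy in $U_n$ taking $\al$ to a loop in $A$.

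The principal obstacle is that a subdivision point $s_j$ may satisfy $\al(s_j)\notin A$, so the individual pieces need not have endpoints in $A$ and Lemma 4.2 is not immediately applicable. I would resolve this by a preliminary snapping step: having arranged at the outset (using local path connectedness of $A$ and $X$, and shrinking balls in Lemma 4.2's proof) that each $V(a_i)$ is path connected, has nonempty path connected intersection with $A$, and sits inside a simply-connected-in-$U_n$ neighborhood of $a_i$, for each $s_j$ with $\al(s_j)\notin A$ I choose auxiliary paths in $V(a_{i_j})$ and $V(a_{i_{j+1}})$ from $\al(s_j)$ to chosen points of $A\cap V(a_{i_j})$ and $A\cap V(a_{i_{j+1}})$ respectively, and bridge these by a path in $A$ (path connectedness of $A$ being available in the situation of Theorem A). By the simple-connectivity arrangement, the inserted detour loop is nullhomotopic inside $U_n$, so substituting produces a loop homotopic to $\al$ in $U_n$ whose subdivision points all lie in $A$, and the earlier argument applies. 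The most delicate bookkeeping is verifying that all the bridging paths and detour loops genuinely fit inside $U_n$, which is precisely what the nested choice of the pairs $(V(a_i),W(a_i))$ inside $U_n$ is designed to guarantee.
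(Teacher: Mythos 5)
Your overall architecture (Lemma 4.2 at each boundary point, compactness of $\partial A$, a finite subcover, Lebesgue subdivision of a loop, Corollary 3.1 to glue) matches the paper's, but there is a genuine gap at exactly the point you flag as ``the most delicate bookkeeping.'' After bridging, the pieces of the loop unavoidably straddle \emph{two} of the covering sets: the bridge $\bt_j$ at a subdivision point $s_j$ with $\al(s_j)\notin A$ can lie in $V(a_{i_j})$ or in $V(a_{i_{j+1}})$ but not in both (their intersection need not meet $A$), so at least one of the adjacent pieces $\ov{\bt}_{j-1}*\al_j*\bt_j$ lives only in $A\cup V(a_{i_j})\cup V(a_{i_{j+1}})$. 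Your Lemma 4.2 data only gives $\pi_1$-containment of $A\cup V(a_i)$ in $A$ for each \emph{single} $i$, and your assertion that the inserted detour loop is ``nullhomotopic inside $U_n$ by the simple-connectivity arrangement'' is unjustified: that detour traverses paths in two distinct $V(a_i)$'s joined by a path in $A$, so it is not contained in any one of the simply-connected-in-$U_n$ neighborhoods, and there is no reason for it to be null (or even homotopic into $A$) within $U_n$.

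The paper closes precisely this hole with a separate claim you have no analogue of: for each $n$ there is a finer stage $k_n\geq n$ such that for all $m>k_n$, whenever two members ${G'}_m^i,{G'}_m^j$ of the stage-$m$ cover intersect, the union $A\cup {G'}_m^i\cup {G'}_m^j$ is $\pi_1$-contained in $A$ at $U_n$. This is proved by contradiction: a sequence of bad loops $\al_n$ yields points $x_n\in \operatorname{Im}\al_n\cap\partial A$ with a convergent subsequence $x_{n_k}\to x$, and for large $k$ both offending sets are absorbed into a single neighborhood $G'$ of $x$ supplied by Lemma 4.2, contradicting badness. The set $V_n$ is then built from the stage-$k_n$ cover, not the stage-$n$ cover, and the pairwise claim is what licenses homotoping each two-set piece $\al_i*\bt_i$, $\ov{\bt}_{i-1}*\al_i*\bt_i$ into $A$ within $U_n$. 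Without this (or some substitute controlling unions of two covering sets), your argument does not close; everything else in your write-up (the nestedness fix via $U_{n+1}\cap V_n$, handling pieces in $A^\circ$, the reduction of an arbitrary nested basis) is fine.
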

\begin{proof}
Let $U_n=S_{\frac{1}{n}}(A)$ be the open ball with radius $1/n$ of $A$, then $\{U_n\}_{n\in \N}$ is a nested basis for $A$. Since $X$ is locally 1-connected, for every $n\in \N$ and $a\in \pa A$ there is an open neighborhood $G_n(a)\sub U_n$ at $a$ such that $G_n(a)$ is simply connected in $U_n$. By Lemma 4.2, there exists $G_n'(a)\sub G_n(a)$ such that $A\cup G_n'(a)$ is $\pi_1$-contained in $A$ at $U_n$. By local 0-connectivity of $X$ we can assume every $G_n'(a)$ is path connected. Since $A$ is compact, for every $n\in \N$ there exists $m_n\in \N$ and $a_n^1,a_n^2,...,a_n^{m_n}$ such that
$A\sub int(A)\bigcup(\cup_{i=1}^{m_n}{G'}_n^i(a_n^i) )$.

We claim for every $n\in \N$ there exists integer $k_n\geq n$ such that for every $m>k_n$ and every couple ${G'}_m^i(a_m^i),{G'}_m^j(a_m^j)$ with nonempty intersection, $A\cup {G'}_m^i(a_m^i)\cup {G'}_m^j(a_m^j)$ is $\pi_1$-contained in $A$ at $U_n$. By contradiction assume there exists no such $k_{n_0}$ for an $n_0\in \N$, then there are a sequence $\{s_n\}_{n\in \N}$, a couple ${G'}_{s_n}^i(a_{s_n}^i),{G'}_{s_n}^j(a_{s_n}^j)$ and loops $\al_n:I\lo A\cup {G'}_{s_n}^i(a_{s_n}^i)\cup {G'}_{s_n}^j(a_{s_n}^j)$ such that there is no loop in $A$ homotopic to $\al_n$ by a homotopy in $U_{n_0}$. Let $x_n\in Im\al_n\cap \pa A$. Since $\pa A$ is compact, there is a subsequence $\{x_{n_k}\}_{k\in \N}$ converging to $x\in \pa A$. Let $G$ be a simply connected neighborhood of $x$ in $U_{n_0}$. By Lemma 4.2 there exists an open neighborhood $x\in G'\sub G$ such that $A\cup G'$ is $\pi_1$-contained in $A$ at $U_{n_0}$. There exists $k_0\in \N$ such that for each $k>k_0$, ${G'}_{s_{n_k}}^i(a_{s_{n_k}}^i)\cup {G'}_{s_{n_k}}^j
(a_{s_{n_k}}^j)\sub G'\cup A$ which implies $Im\al_{n_k}\sub A\cup G'$, for $k>k_0$. Therefore $\al_{n_k}$ is homotopic to a loop in $A$ by a homotopy in $U_{n_0}$ which is a contradiction.

Let $V_n:=A\bigcup(\cup_{i=1}^{m_{k_n}}{G'}_{k_n}^i(a_{k_n}^i))$. We show that $V_n$ is $\pi_1$-contained in $A$ at $U_n$. Consider $\al:I\lo V_n$ as a loop at $a\in A$. Since $Im\al\sub int(A)\bigcup(\cup_{i=1}^{m_{k_n}}{G'}_{k_n}^i(a_{k_n}^i))$, there is the Lebesque number $\delta$ for this open cover. Let $n_0\in \N$ such that $\fr{1}{n_0}<\delta$, then there is $\{G_1, G_2,..., G_{n_0}\}\sub\{int(A), {G'}_{k_n}^1(a_{k_n}^1), {G'}_{k_n}^2(a_{k_n}^2),..., {G'}_{k_n}^{m_{k_n}}(a_{k_n}^{m_{k_n}})\}$ such that $\al([\fr{i-1}{n_0},\fr{i}{n_0}])\sub G_i$. Let $\al_i=\al|_{[\fr{i-1}{n_0},\fr{i}{n_0}]}\circ\vf_i$, where $\vf_i:[0,1]\lo [\fr{i-1}{n_0},\fr{i}{n_0}]$ is linear homeomorphism for $i=1,...,n_0$. Since $a=\al(0)\in G_1$ and $\al(\fr{1}{n_0})\in G_1\cap G_2$, if $\bt_1:I\lo A\cup G_2$ is a path from $\al(\fr{1}{n_0})$ to $a$ ($G_i$'s and $A$ are path connected), then $\al_1*\bt_1$ is a loop in $A\cup G_1\cup G_2$ which implies that $\al_1*\bt_1$ is homotopic to a loop $\gamma_1:I\lo A$ by a homotopy in $U_n$. Also, there exists $\bt_2:I\lo A\cup G_3$ such that $\ov{\bt_1}*\al_2*\bt_2$ is homotopic to a loop in $A$ by a homotopy in $U_n$ and similarly there are $\bt_i$'s and $\gamma_i$'s for $i=1, 2,..., n_0$ such that
$$\al\simeq(\al_1*\bt_1)*(\ov{\bt_1}*\al_2*\bt_2)*...*(\ov{\bt}_{n_0-2}*\al_{n_0-1}*\bt_{n_0-1})*(\ov{\bt}_{n_0-1}*\al_{n_0})\simeq\gamma_1*\gamma_2*...*\gamma_{n_0}$$
which implies that $\al$ is homotopic to a loop in $A$ by a homotopy in $U_n$, as desired. Note that if $\{V_n\}_{n\in \N}$ is not decreasing, then we can put ${V'}_n$ be the path component of $V_1\cap V_2\cap...\cap V_n$ that contains $A$.

Let $\{U_n'\}_{n\in\N}$ be an arbitrary nested basis of $A$. Since $\{U_n\}_{n\in\N}$ is a nested basis of $A$, there exists a subsequence $\{k_n\}_{n\in\N}$ of $\N$ such that $U_{k_n}\sub U_n'$ for every $n\in\N$. Let $\{V_{k_n}\}$ be the $\pi_1$-refinement of $\{U_{k_n}\}$.
Then $\{V'_n\}_{n\in\N}$ is a $\pi_1$-refinement of $\{U_n'\}_{n\in\N}$, where $V'_n=V_{k_n}$ for each $n$.
\end{proof}
\begin{remark} If $X$ is a metric space with a metric $d$, then for the quotient space $X/A$, the map $\ov{d}:X/A\times X/A\lo [0,+\infty)$ defined by
 \begin{displaymath}
{\ov d}(x,y)= \left\{
\begin{array}{lr}
min\{d(x,y), d(x,A)+d(y,A)\}    &        x,y\neq * \\
d(A,x)       &       x\neq y=* \\
0             &       x=y=*,\\
\end{array}
\right.
\end{displaymath}
where $*=p(A)$ and $p:X\rightarrow X/A$ is the natural quotient map, is a metric and the topology induced by this metric on $X/A$ is equivalent to the quotient topology on $X/A$ since $A$ is compact (see \cite[Section 3.1]{BU}).
\end{remark}
For a topological space $X$, a loop $\al$ in $X$ based at $x$ is called \emph{semi-simple} if $\al^{-1}(\{x\}^c)=(0,1)$ and is called \emph{geometrically simple} if $\al^{-1}(\{x\}^c)=(a,b)$ for $a,b\in [0,1]$. Also, every geometrically simple loop is homotopic to a semi-simple loop \cite{T}.
\begin{theorem}
Let $X$ be a locally 1-connected metric space and $A$ be a connected, locally path connected and compact subset of $X$. Then $X/A$ is locally 1-connected space.
\end{theorem}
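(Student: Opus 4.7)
The plan is to verify local path-connectedness and local simple-connectedness of $X/A$ pointwise. Since $A$ is compact in the Hausdorff space $X$, it is closed, and the restriction $p|_{X\setminus A}\colon X\setminus A\to (X/A)\setminus\{*\}$ is a homeomorphism; consequently, at every point $p(x)\neq *$ the space $X/A$ inherits local $1$-connectedness directly from $X$ at $x$. All the work therefore lies at the quotient point $*=p(A)$.

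Given an open neighborhood $W$ of $*$, I set $U=p^{-1}(W)$, take the nested basis $U_n=S_{1/n}(A)$ of $A$ in $X$ with $U_n\subseteq U$ for $n$ large, and apply Theorem~4.4 to obtain a $\pi_1$-refinement $\{V_n\}$. Replacing each $V_n$ by the path component containing $A$ (possible by local path-connectedness of $X$), I may assume every $V_n$ is path connected. Since $A\subseteq V_n$, each $V_n$ is saturated for $p$, so $p(V_n)$ is a path connected open neighborhood of $*$ inside $W$, which settles local path-connectedness at $*$.

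For simple connectedness at $*$, I claim every loop $\gamma\colon I\to p(V_n)$ based at $*$ is null-homotopic in $p(U_n)\subseteq W$. Decompose $\gamma^{-1}(\{*\}^c)=\bigsqcup_{k}(a_k,b_k)$. Using the metric $\bar{d}$ on $X/A$ from Remark~4.5 together with uniform continuity of $\gamma$, reindex so that there exist integers $m_k\to\infty$ with $\gamma([a_k,b_k])\subseteq p(V_{m_k})$. For each fixed $k$, the restriction $\gamma|_{(a_k,b_k)}$ lifts through $p|_{X\setminus A}$ to a path $\tilde{\gamma}_k\colon(a_k,b_k)\to V_{m_k}\setminus A$ whose distance to $A$ tends to zero at both endpoints (although the lift need not converge). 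Fix a base point $a_0\in A$, choose $t_i\nearrow b_k$ and $s_j\searrow a_k$ with $\tilde{\gamma}_k(t_i)\in V_{m_k+i}$ and $\tilde{\gamma}_k(s_j)\in V_{m_k+j}$, together with paths $\tilde{\mu}_i,\tilde{\lambda}_j$ joining these points to $a_0$ inside $V_{m_k+i}$, $V_{m_k+j}$ respectively. The concatenations $\overline{\tilde{\mu}_i}*\tilde{\gamma}_k|_{[t_i,t_{i+1}]}*\tilde{\mu}_{i+1}$ are loops in $V_{m_k+i}$ at $a_0$, and by the $\pi_1$-containment of $V_{m_k+i}$ in $A$ at $U_{m_k+i}$ each is homotopic in $U_{m_k+i}$ to a loop in $A$; applying $p$, which crushes $A$ to $*$, the corresponding loops in $X/A$ become null-homotopic in $p(U_{m_k+i})$. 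Lemma~3.3 applied in $X/A$ once at each endpoint of $[a_k,b_k]$ with the nested basis $\{p(U_{m_k+i})\}_i$ then gives a homotopy in $p(U_{m_k})$ from $\gamma|_{[a_k,b_k]}$ to $p\circ\sigma_k$, where $\sigma_k$ is a loop in $V_{m_k}$ based at $a_0$. A final use of the $\pi_1$-containment of $V_{m_k}$ in $A$ at $U_{m_k}$ contracts $p\circ\sigma_k$ to the constant loop at $*$ inside $p(U_{m_k})$, so $\gamma|_{[a_k,b_k]}$ is null-homotopic there. Since $\{p(U_{m_k})\}_k$ is a nested basis of $*$, Lemma~3.4 assembles these piecewise null-homotopies into a null-homotopy of $\gamma$ inside $p(U_{m_1})\subseteq W$.

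The principal obstacle, and the reason for this roundabout route, is that the interior lift $\tilde{\gamma}_k$ generally fails to extend continuously to $[a_k,b_k]$: although $d(\tilde{\gamma}_k(t),A)\to 0$, it may oscillate among distinct points of $A$. This is precisely what forces the use of Theorem~4.4 in full strength (a genuine $\pi_1$-refinement, not merely a nested basis), the Lemma~3.3 truncation trick to absorb the endpoint misbehavior through paths in successively finer refinements, and finally Lemma~3.4 to convert the piecewise null-homotopies into a global one.
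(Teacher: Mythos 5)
Your proof is correct and follows essentially the same route as the paper's: homeomorphism off the basepoint, the $\pi_1$-refinement $\{V_n\}$ from Theorem 4.4, the truncation lemma (Lemma 3.2) to repair the endpoint behaviour of the lifted pieces, the $\pi_1$-containment of $V_n$ in $A$ to contract each piece, and Lemma 3.4 to assemble the piecewise null-homotopies. The only difference is organizational: you run the general decomposition $\gamma^{-1}(\{*\}^c)=\bigsqcup_k(a_k,b_k)$ in one pass, whereas the paper splits the argument into three cases (semi-simple with endpoint limits, semi-simple without, and non-semi-simple), but the content is the same.
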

\begin{proof}
 Since $X$ is locally path connected and $A$ is connected, $X/A$ is locally path connected. Clearly $X-A$ is an open subset of $X$ and hence it is locally 1-connected. Since $q:=p|_{X-A}:X-A\lo (X/A)-\{*\}$ is homeomorphism, $(X/A)-\{*\}$ is locally 1-connected.

Let $U$ be an open neighborhood of $*$. By the proof of Theorem 4.4, the nested basis $\{U_n=S_{\frac{1}{n}}(A)\}_{n\in \N}$ for $A$ has a refinement $\{V_n\}_{n\in \N}$ as its $\pi_1$-refinement in $A$ at $\{U_n\}_{n\in \N}$ such that $V_n$'s are path connected. We can assume that $U_1\sub p^{-1}(U)$ and show that $V:=p(V_1)$ is a desired open neighborhood. Let $\al:I\lo V$ be a loop at $*$, then we must show that $(i_V)_*([\al])=0$, where $i:V\hookrightarrow U$ is the inclusion map.\\
Case 1:

 Suppose that $\al$ is a semi-simple loop and $a_0=lim_{t\rightarrow 0}p^{-1}(\al(t))$ and $a_1=lim_{t\rightarrow 1}p^{-1}(\al(t))$ exist. Obviously $a_0,a_1\in\partial A$ and since $A$ is closed path connected, there exists a loop $\wt\al$ in $V_1$ with base point $a\in A$ such that $p\circ\wt\al\simeq\al$ by a homotopy in $V$.
Since $V_1$ is $\pi_1$-contained in $A$ at $U_1$, there is a loop $\beta$ in $A$ such that $\beta\simeq\wt\al$ by a homotopy in $U_1$. Therefore $\al\simeq p\circ\wt{\al}\simeq p\circ\bt\simeq e_*$ by homotopies in $p(U_1)\sub U$, as desired.\\
Case 2:

 Suppose that $\al$ is a semi-simple loop and at least one of the above limits does not exist.
 Let $\wt{\al}=q^{-1}(\al|_{(0,1)})$. Without loss of generality, assume $a_0=lim_{t\rg 0}\ov\al(t)$ exists and $a_1\in A$ is a limit point of $\ov\al((1/2,1))$. The point $a_1$ is a limit point and $\al$ is continuous, hence there is an increasing sequence $\{t_n\}_{n\in \N}$ in $(1/2,1)$ such that $\wt{\al}([t_n,1))\sub V_n$, for every $n\in \N$ and $t_n\rightarrow 1$. Since the $V_n$'s are path connected, there are paths $\gamma_n:[t_n,1]\lo V_n$ from $\wt{\al}(t_n)$ to $a_1$.
  If $\bt_n:=p\circ\gamma_n$, then $Im\bt_n\sub p(V_n)$ and $\ov{\bt}_n*\hat{\al}|_{[t_n,t_{n+1}]}*\bt_{n+1}$ is null in $p(U_n)$ since $\ov{\gamma}_n*\hat{\wt{\al}}|_{[t_n,t_{n+1}]}*\gamma_{n+1}$ is a loop in $V_n$. Let
  \begin{displaymath}
{\al_n}(t)= \left\{
\begin{array}{lr}
\al(t)   &       t\leq t_n \\
\bt_n(t)        &      t\geq t_n. \\

\end{array}
\right.
\end{displaymath}
By Lemma 3.2, $\al\simeq\al_n$ for each $n$ and by Case 1, $\al_n$'s are nullhomotopic in $p(U_1)$ which implies that $\al$ is nullhomotopic.

Case 3:

If $\al$ is not a semi-simple loop in $V$, then $\al^{-1}(X/A-\{*\})=\bigcup_{i\in \N}(a_i,b_i)$. Obviously $\al|_{[a_i,b_i]}$ is null in a $p(U_{n_i})$, where $n_i=max\{n\in\N|\al([a_i,b_i])\sub p(V_n)\}$. By Lemma 3.4, $\al$ is null.

Similarly, the result holds if $\al$ is a geometrically simple loop. Also, in Case 3, if $\al^{-1}(X/A-\{*\})=\bigcup_{i=1}^n(a_i,b_i)$, by Corollary 3.1 the result holds.
\end{proof}\ \\
{\bf Proof of Theorem A}. By Remark 4.5 and Theorem 4.6, $X/A_1$ is a locally 1-connected metric space, hence we can use Theorem 4.6 again and by induction the result holds.\\

In the following examples, we show that locally path connectedness and locally simply connectedness of $X$ are necessary conditions in Theorem A.
\begin{example}
Let $Y=\{(x,y)\in\mathbb{R}^2\ |\ x^2+y^2=(1/2+1/n)^2,\ n\in\N\}$, $A=\{(x,y)\in\mathbb{R}^2\ |\ x^2+y^2=1/4\}\cup\{(x,0)\in\mathbb{R}^2\ |\ 1/2\leq x\leq 1\}$ and $X=Y\cup A$ with $a=(1,0)$ as the base point (see Figure 2). Then $X$ is connected, locally simply connected metric space and $A$ is compact, connected and locally path connected subset of $X$, but obviously, $X$ is not locally path connected. Since for every open neighborhood $U\sub X/A$ of $*$ there exists $N_0\in\N$ such that for $n>N_0$, $p^{-1}(U)$ contains circles with radius $1/2+1/n$, $X/A$ is the
union of a null sequence of simple closed curves meeting in a common point that is homeomorphic to the Hawaiian
earing space. Hence $X/A$ is not locally simply connected.
\begin{figure}
\begin{center}
 \includegraphics[scale=1]{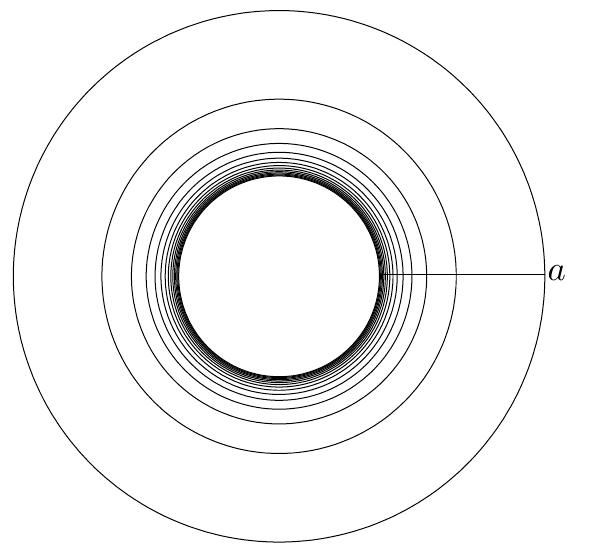}
  \caption{}\label{1}
 \end{center}
\end{figure}
\end{example}

\begin{example}
Let $X=\{(x,y)\in\mathbb{R}^2\ |\ 1/4\leq x^2+y^2\leq 1\}\setminus\{(-1/2-1/n,0)\in\mathbb{R}^2\ |\ n\in\N\}$ and $A=\{(x,y)\in\mathbb{R}^2\ |\ x^2+y^2=1/4\}\cup\{(x,0)\in\mathbb{R}^2\ |\ 1/2\leq x\leq 1\}$.
Then $X$ is a connected, locally path connected metric space that is not locally simply connected and $A$ is a compact, connected and locally path connected subset of $X$.
Similar to Example 4.7, it can be proved that $X/A$ is homeomorphic to $Y=D^1\setminus\{(-1/n,0)\ |\ n\in\N\}$ which is not locally simply connected.
\end{example}
\newpage
\section{Some Applications to fundamental groups}
In this section, first we give a short proof of Theorem B and then prove some properties of the fundamental group of the quotient space $X/(A_1,A_2,...,A_n)$.\\
\ \ \\
{\bf Proof of Theorem B.} By Theorem 2.4, it suffices to prove that $X/(A_1,A_2,...,A_n)$ is semi-locally simply connected. But by Theorem A, we have $X/(A_1,A_2,...,A_n)$ is locally 1-connected which is more.
\begin{remark}
In Examples 4.7 and 4.8, $p_*$ is not onto since $\pi_1(X,a)$ is a free group on countably many generators and $\pi_1(X/A,*)$ is uncountable. This shows that the hypothesis locally 1-connectedness is necessary for Theorem B.
\end{remark}
We know that any homomorphic image of a finitely generated group is also finitely generated. Hence we have the following consequence of Theorem B.
\begin{corollary}
Let $X$ be connected, locally 1-connected metric space and $A_1,A_2,...,A_n$ be compact, connected, locally path connected subsets of $X$. If $\pi_1(X,a)$ is finitely generated, then so is $\pi_1(X/(A_1,A_2,...,A_n),*)$.
\end{corollary}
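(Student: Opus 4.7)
The plan is to apply Theorem B directly and then invoke an elementary fact from group theory. First, note that since $X$ is connected and locally path connected it is path connected, so the isomorphism type of $\pi_1(X,\cdot)$ does not depend on the basepoint; in particular we may assume the basepoint lies in $\bigcup_{i=1}^n A_i$, which is the hypothesis required to invoke Theorem B.

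Since $X$ is a locally 1-connected metric space and the $A_i$ are connected, locally path connected and compact pairwise disjoint subspaces, Theorem B applies and yields that the induced homomorphism
\[
p_*:\pi_1^{qtop}(X,a)\lo\pi_1^{qtop}(X/(A_1,\ldots,A_n),*)
\]
is an epimorphism; forgetting the topology on these groups, the underlying map $p_*:\pi_1(X,a)\to\pi_1(X/(A_1,\ldots,A_n),*)$ is a surjective group homomorphism. Now I would invoke the standard algebraic fact that a surjective image of a finitely generated group is finitely generated: if $g_1,\ldots,g_k$ generate $\pi_1(X,a)$, then their images $p_*(g_1),\ldots,p_*(g_k)$ generate $\pi_1(X/(A_1,\ldots,A_n),*)$, since every element of the codomain is $p_*(w)$ for some word $w$ in the $g_i^{\pm 1}$, and $p_*$ respects the group operation.

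There is essentially no obstacle here: the genuine content of the corollary is already packaged inside Theorem B (and hence inside Theorem A and Theorem 2.4). The only minor point to address is the choice of basepoint, which is handled by path connectedness of $X$.
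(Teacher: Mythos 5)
Your proof is correct and follows exactly the paper's route: the paper derives this corollary from Theorem B together with the fact that a homomorphic image of a finitely generated group is finitely generated. Your extra remark about moving the basepoint into $\bigcup_{i=1}^n A_i$ via path connectedness is a reasonable (and harmless) clarification of a point the paper leaves implicit.
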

By a theorem of Shelah \cite{sh}, if $X$ is a compact, connected, locally path connected metric space and $\pi_1(X,x)$ is countable, then $\pi_1(X,x)$ is finitely generated. Using this and Theorem B, we have
\begin{corollary}
Let $X$ be compact, connected, locally 1-connected, metric space and $A_1,A_2,...,A_n$ be closed, connected, locally path connected subsets of $X$.
If $\pi_1(X/(A_1,A_2,...,A_n),*)$ is not finitely generated, then $\pi_1(X,a)$ is uncountable.
\end{corollary}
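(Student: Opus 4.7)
The plan is to argue by contrapositive: assume $\pi_1(X,a)$ is countable, and deduce that $\pi_1(X/(A_1,A_2,\ldots,A_n),*)$ is finitely generated; this contradicts the hypothesis of the corollary and completes the proof. So the whole argument amounts to chaining two results already established in the paper and verifying that their hypotheses are met in the present setting.

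First, I would check that $X$ satisfies the hypotheses of Shelah's theorem as quoted in the paper. Locally $1$-connected means locally path connected and locally simply connected, so in particular $X$ is locally path connected; combined with the standing assumptions that $X$ is compact, connected, and metric, this places us exactly in Shelah's framework. Under the contrapositive assumption that $\pi_1(X,a)$ is countable, Shelah's theorem then gives that $\pi_1(X,a)$ is finitely generated.

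Second, I would transfer this to the quotient via Corollary 5.2. Corollary 5.2 requires each $A_i$ to be compact, connected, and locally path connected; the $A_i$'s are given as closed, connected, locally path connected subsets of $X$, and since $X$ is compact, each $A_i$ is automatically compact. Therefore the hypotheses of Corollary 5.2 apply, and finite generation of $\pi_1(X,a)$ passes to $\pi_1(X/(A_1,A_2,\ldots,A_n),*)$, contradicting the assumption that the latter group is not finitely generated.

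There is no real obstacle here beyond a careful check of hypotheses; the only mildly nontrivial point is the promotion of \emph{closed} to \emph{compact} by invoking compactness of $X$, together with the observation that local $1$-connectedness supplies the local path connectedness needed for Shelah's theorem. Once those two remarks are in place, the proof reduces to a two-line contrapositive chaining of Shelah's theorem with Corollary 5.2.
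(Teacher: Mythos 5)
Your proof is correct and follows the paper's intended argument exactly: the paper derives this corollary by combining Shelah's theorem with Theorem B (of which Corollary 5.2 is the immediate consequence you invoke), arguing by contrapositive. Your explicit checks that closed subsets of a compact space are compact and that local 1-connectedness supplies local path connectedness are exactly the hypothesis verifications the paper leaves implicit.
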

J. Dydak and Z. Virk \cite{D} proved that if $X$ is a connected, locally path connected metric space and $\pi_1(X,x)$ is countable, then $\pi_1(X,x)$ is a finitely presented group. Note that a homomorphic image of a finitely presented group is not necessarily a finitely presented group.
\begin{corollary}
Let $X$ be connected, locally 1-connected metric space and $A_1,A_2,...,A_n$ be compact, connected, locally path connected subsets of $X$. If $\pi_1(X,a)$ is countable, then $\pi_1(X/(A_1,A_2,...,A_n),*)$ is finitely presented.
\end{corollary}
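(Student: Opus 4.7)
The plan is to chain three ingredients already in place: the Dydak--Virk theorem just cited, Theorem B, and Theorem A together with Remark 4.5. The crucial observation is that Dydak--Virk must be applied on the quotient side, not on $X$, because (as explicitly noted just above the corollary) a homomorphic image of a finitely presented group need not be finitely presented. Setting $Y:=X/(A_1,\ldots,A_n)$, the strategy is therefore to show that $Y$ itself satisfies the hypotheses of Dydak--Virk and that $\pi_1(Y,*)$ is countable; the conclusion then follows at once from that theorem applied to $Y$.

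First I would verify that $Y$ is a connected, locally path connected, metric space. Local path connectedness (in fact local $1$-connectedness) is exactly Theorem A. Connectedness of $Y$ is immediate from the connectedness of $X$ and continuity of the quotient map. For metrizability, I would iterate Remark 4.5: collapsing the compact set $A_1$ in the metric space $X$ produces a metric space $X/A_1$; the image of $A_2$ in $X/A_1$ under the (continuous) quotient map is still compact, connected, and locally path connected, so Remark 4.5 applies again, and after $n$ steps $Y$ is metric.

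Next I would invoke Theorem B to obtain a surjective homomorphism $p_*:\pi_1(X,a)\lo\pi_1(Y,*)$. Since $\pi_1(X,a)$ is countable by hypothesis, its homomorphic image $\pi_1(Y,*)$ is countable as well. Applying the Dydak--Virk theorem to $Y$ (connected, locally path connected, metric, with countable fundamental group) yields that $\pi_1(Y,*)$ is finitely presented, as required.

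The main obstacle here is conceptual rather than computational: one must resist the temptation to apply Dydak--Virk directly to $X$ and then push finite presentability across $p_*$, which would be invalid. Once one commits to applying Dydak--Virk on the target $Y$, the remaining work reduces to confirming that local $1$-connectedness and metrizability survive the iterated collapse, and that is precisely what Theorem A and (iteratively) Remark 4.5 supply.
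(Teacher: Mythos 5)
Your proposal is correct and is essentially the paper's intended argument: the paper derives this corollary by applying Dydak--Virk to the quotient space, whose countable fundamental group comes from Theorem B and whose status as a connected, locally path connected metric space comes from Theorem A together with the iterated use of Remark 4.5. Your explicit warning against applying Dydak--Virk to $X$ and pushing finite presentability through $p_*$ matches the caveat the paper itself states just before the corollary.
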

J.W. Cannon and G.R. Conner \cite{CC} proved that if $X$ is a one-dimensional metric space which is connected, locally path connected and semi-locally
simply connected (or equivalently if X admits a universal covering space), then $\pi_1(X,x)$ is a free group. Therefore by Theorem A we have
\begin{corollary}
If $X$ is a one-dimensional metric space which is connected, locally 1-connected, and $A_1,A_2,...,A_n$ are compact, connected, locally path connected subsets of $X$, then $\pi_1(X/(A_1,A_2,...,A_n),*)$ is a free group.
\end{corollary}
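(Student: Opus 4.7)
The plan is to reduce the statement to a direct application of the Cannon--Conner theorem quoted just before the corollary, by verifying that the quotient space $Y=X/(A_1,A_2,\ldots,A_n)$ still satisfies the four hypotheses of that theorem: it is metric, connected, locally path connected, and semi-locally simply connected (and one-dimensional). Three of these are essentially free. Connectedness of $Y$ follows because it is the continuous image of the connected space $X$ under the quotient map $p$. Metrizability follows by iterating Remark 4.5: each $A_i$ is compact, so collapsing them one at a time produces at each stage a metric space, and at the final stage we get a metric on $Y$ inducing the quotient topology. Local path connectedness and semi-local simple connectedness are immediate consequences of Theorem A, which asserts that $Y$ is locally $1$-connected (hence locally path connected and locally simply connected, and in particular semi-locally simply connected).

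The one genuine verification is that $Y$ is still one-dimensional as a metric space. I would argue this by induction on $n$, so it suffices to treat $Y=X/A$ for a single compact, connected, locally path connected subset $A$ of the one-dimensional metric space $X$. At any point of $Y-\{*\}$, the restriction $p|_{X-A}\colon X-A\to Y-\{*\}$ is a homeomorphism, so the local dimension there is at most $1$. At the collapsed point $*$, I would use the characterization of one-dimensional separable metric spaces: every point of $X$ has arbitrarily small open neighborhoods with zero-dimensional (topological) boundary. Since $A$ is compact, one can cover $A$ by finitely many such neighborhoods $U_1,\ldots,U_k$, each with $\dim\partial U_i\le 0$, and then $U=\bigcup_{i=1}^{k}U_i$ is an open neighborhood of $A$ whose boundary lies in $\bigcup_{i=1}^{k}\partial U_i$ and is therefore zero-dimensional (finite union of zero-dimensional closed sets). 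Shrinking gives arbitrarily small such $U$, and the images $p(U)$ form a neighborhood basis of $*$ in $Y$ with zero-dimensional boundary, since $p$ is a homeomorphism off $A$ and $\partial p(U)=p(\partial U)$. This yields $\dim Y\le 1$, and $\dim Y\ge 1$ holds trivially (if $Y$ were zero-dimensional its fundamental group would be trivial and the conclusion is vacuous, or one notes that $p(X-A)$ is a nonempty open subset homeomorphic to an open subset of $X$).

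Having established that $Y$ is a one-dimensional, connected, locally path connected metric space that is also semi-locally simply connected, the Cannon--Conner theorem \cite{CC} cited in the paragraph preceding the corollary applies verbatim to $Y$, yielding that $\pi_1(Y,*)=\pi_1(X/(A_1,A_2,\ldots,A_n),*)$ is a free group.

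The only nontrivial step is the verification of one-dimensionality of the quotient; the remaining ingredients are packaged by Theorem A, Remark 4.5, and the Cannon--Conner freeness theorem, so the main obstacle is the dimension-theoretic lemma that collapsing a compact subset of a one-dimensional metric space to a point preserves the covering dimension at the identified point. The argument above via finite covers by neighborhoods with zero-dimensional boundaries handles this cleanly.
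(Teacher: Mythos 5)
Your proposal follows the same route as the paper: apply Theorem A to get local $1$-connectedness (hence semi-local simple connectedness and local path connectedness) of the quotient, note metrizability via Remark 4.5 and connectedness via continuity of $p$, and then invoke the Cannon--Conner freeness theorem. The paper's own proof is literally a one-line citation of Theorem A and \cite{CC}; it never addresses whether $X/(A_1,\ldots,A_n)$ is still one-dimensional, which is a hypothesis of the Cannon--Conner theorem. Your proposal is therefore more complete than the paper on exactly this point, and your argument --- covering the compact set $A$ by finitely many neighborhoods with zero-dimensional boundary, taking the union $U$, and observing that $p$ carries $\partial U\subseteq X-A$ homeomorphically onto $\partial p(U)$ --- is the right idea and works. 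One caveat you should be aware of: the characterization of $\dim\le 1$ by the existence of arbitrarily small neighborhoods with zero-dimensional boundary is the small inductive dimension, which agrees with covering dimension for \emph{separable} metric spaces but not for metric spaces in general (Roy's example); since Corollary 5.5 does not assume separability, your argument as written either needs that extra hypothesis or needs to be rerun with a notion of dimension for which the boundary characterization and the Cannon--Conner theorem are simultaneously valid. This is a refinement of a step the paper omits entirely, so it does not put you at odds with the paper's reasoning --- it exposes an implicit assumption in it.
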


\begin{theorem}
If $X$ is a separable, connected, locally 1-connected metric space and $A_1,A_2,...,A_n$ are compact, connected, locally path connected subsets of $X$, then $\pi_1(X/(A_1,A_2,...,A_n),*)$ is countable. Moreover, $\pi_1(X/(A_1,A_2,...,A_n),*)$ is finitely presented.
\end{theorem}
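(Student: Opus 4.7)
My plan is to reduce the statement to two already available tools: Theorem B, which provides a surjection $p_{*}:\pi_1(X,a) \to \pi_1(X/(A_1,\ldots,A_n),*)$, and the Dydak--Virk theorem cited just above the statement, which says that a countable fundamental group of a connected, locally path connected metric space is automatically finitely presented. It therefore suffices to prove (i) $\pi_1(X,a)$ is countable, and (ii) $Y := X/(A_1,A_2,\ldots,A_n)$ is a connected, locally path connected metric space. Then (i) transfers to $Y$ through Theorem B, and (ii) lets us apply Dydak--Virk to $Y$ to upgrade countability to finite presentation.

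For step (i), I would use that a separable metric space is second countable and that local 1-connectedness of $X$ supplies a basis of open, path connected, simply connected subsets, from which one can extract a countable subbasis $\mathcal{B}=\{B_n\}_{n\in\mathbb{N}}$ of such sets. By second countability, each pairwise intersection $B_n\cap B_m$ has only countably many path components, in each of which we fix a representative point. Given a loop based at $a$, the Lebesgue number lemma partitions the interval into finitely many subintervals each of whose images lies in some $B_{n_i}$; replacing the transition values by the chosen representatives via homotopies inside the simply connected sets $B_{n_i}$, the homotopy class of the loop is determined by a finite word of data drawn from a countable alphabet. Thus $\pi_1(X,a)$ is countable. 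Equivalently, the edge-path groupoid of a good countable cover of $X$ yields a countable presentation.

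For step (ii), connectedness of $Y$ is inherited as the continuous image of $X$. Local path connectedness was already used in the proof of Theorem 4.6, where collapsing a connected subspace of a locally path connected space preserves the property; iterating through $X \to X/A_1 \to X/(A_1,A_2) \to \cdots$ therefore yields local path connectedness of $Y$. Metrizability follows from Remark 4.5 applied inductively: after collapsing $A_1$, the quotient is metric; the image of $A_2$ in $X/A_1$ is again compact (continuous image of compact), connected and locally path connected, so Remark 4.5 applies again, and so on through $A_n$. Combining (i), (ii), Theorem B and Dydak--Virk then gives both countability and finite presentation of $\pi_1(Y,*)$. The main technical hurdle is step (i); everything else is routine bookkeeping on properties already set up in the paper.
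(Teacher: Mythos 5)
Your proposal is correct, but it follows a genuinely different route from the paper's. The paper's own proof is a two-line citation: by Theorem A the quotient $Y=X/(A_1,\dots,A_n)$ is locally 1-connected, and it is also a separable, connected, locally path connected metric space (Remark 4.5 applied inductively plus continuity of $p$); Cannon--Conner's Lemma 5.6 is then applied \emph{directly to $Y$} to get countability of $\pi_1(Y,*)$, and Dydak--Virk gives finite presentation. You instead establish countability upstairs: your step (i) is essentially a re-derivation of that same Cannon--Conner lemma for $X$ itself (the sketch is the standard edge-path/nerve argument and is sound; the one point you leave implicit --- that $B_n\cap B_m$ has only countably many path components --- holds because those components are open in a locally path connected, separable space, hence form a countable disjoint open family), and you then push countability down through the epimorphism of Theorem B. Both routes ultimately rest on Theorem A, since the proof of Theorem B invokes it, so you do not actually economize on machinery; what your version buys is a self-contained proof of the countability input rather than a citation, together with the conceptually clean statement that $\pi_1(Y,*)$ is countable simply because it is a homomorphic image of the countable group $\pi_1(X,a)$. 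The paper's version is shorter and does not need the surjectivity of $p_*$ at all for this particular theorem. Your step (ii) coincides with the paper's implicit verification of the hypotheses needed for Dydak--Virk.
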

\begin{proof}
Using Theorem A and \cite[Lemma 5.6]{CC}, we have $\pi_1(X/(A_1,A_2,...,A_n),*)$ is countable and by \cite{D}, $\pi_1(X/(A_1,A_2,...,A_n),*)$ is finitely presented.
\end{proof}
J. Pawlikowski \cite{PW} proved that compact, connected, locally path connected metric spaces with countable fundamental groups are semi-locally simply connected. Therefore we have
\begin{corollary}
If $X$ is a compact, connected, locally path connected metric space
 and $A_1,A_2,...,A_n$ are connected, locally path connected closed subsets of $X$, then $p_*:\pi_1(X,a)\lo\pi_1(X/(A_1,...,A_n),*)$ is an epimorphism if one of the following conditions holds\\
(i) $\pi_1(X/(A_1,A_2,...,A_n),*)$ is countable,\\
(ii) $\pi_1(X/(A_1,A_2,...,A_n),*)$ is finitely generated,\\
(iii) $\pi_1(X/(A_1,A_2,...,A_n),*)$ is finitely presented,\\
(iv) $X/(A_1,...,A_n)$ has universal covering.
\end{corollary}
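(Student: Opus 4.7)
The plan is to reduce every one of the four conditions to the single hypothesis that $Y:=X/(A_1,A_2,\dots,A_n)$ is semi-locally simply connected, and then invoke Theorem 2.4 to conclude that $p_*$ is surjective. The hypotheses of Theorem 2.4 are easy to check at the start: $X$ is first countable (being metric), connected and locally path connected; each $A_i$ is closed, and since $A_i$ is connected and locally path connected it is path connected; and by assumption the $A_i$ are pairwise disjoint. So once we know $Y$ is semi-locally simply connected, the epimorphism of $p_*$ follows immediately from Theorem 2.4.

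Next I would record the ambient regularity of the quotient $Y$. Iterating Remark 4.5, since each $A_i$ is compact, $Y$ is a metric space. It is also compact (continuous image of the compact $X$), connected, and locally path connected (quotients of locally path connected spaces by closed partitions into connected compact pieces are locally path connected, which also follows as in the opening paragraph of the proof of Theorem 4.6). Thus $Y$ is a compact, connected, locally path connected metric space, which is precisely the setting needed to apply Pawlikowski's theorem cited just before the statement.

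With that in hand, the four cases are handled as follows. For (i), Pawlikowski's theorem applied to $Y$ gives semi-locally simple connectivity directly from countability of $\pi_1(Y,*)$. For (ii), any finitely generated group is countable, so (ii) reduces to (i); and (iii) reduces to (ii) because finitely presented groups are in particular finitely generated. For (iv), the existence of a universal covering for a connected, locally path connected space is classically equivalent (via the standard lifting criterion) to semi-local simple connectivity, so the conclusion again follows from Theorem 2.4.

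The main point that needs care, and the only mild obstacle, is verifying that $Y$ meets the hypotheses of Pawlikowski's theorem, specifically that it remains a compact, connected, locally path connected metric space after collapsing the $A_i$'s; this is where one must be explicit about the inductive use of Remark 4.5 and the fact that connectedness and local path connectedness descend to the quotient. Once that is checked, the four implications are all immediate, with the common conclusion being an application of Theorem 2.4.
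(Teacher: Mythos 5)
Your proposal is correct and takes essentially the same route as the paper: show that $X/(A_1,\dots,A_n)$ is a compact, connected, locally path connected metric space (via Remark 4.5 and continuity of $p$), reduce each of the four conditions to semi-local simple connectivity of the quotient (Pawlikowski's theorem for (i)--(iii), the classical covering-space equivalence for (iv)), and conclude with Theorem 2.4. The paper's proof is simply a terser statement of this same argument.
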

\begin{proof}
By Assumptions, Remark 4.5 and continuity of $p:X\lo X/(A_1,A_2,...,A_n)$), $X/(A_1,A_2,...,A_n)$ is a compact, connected, locally path connected metric space and the result follows from Theorem 2.4.
\end{proof}
\begin{remark}
J. Brazas \cite{Br1} introduced a new topology on $\pi_1(X,x)$ coarser than the topology of  $\pi_1^{qtop}(X,x)$ and proved that fundamental groups by this new topology are topological groups, denoted by $\pi_1^\tau(X,x)$. Since the topology of $\pi_1^\tau(X,x)$ is coarser than $\pi_1^{qtop}(X,x)$, the results of \cite{T} remain true and we have a similar result to Theorem B for $\pi_1^\tau(X,x)$ and $\pi_1^\tau(X/(A_1,...,A_n),*)$.
\end{remark}

\subsection*{Acknowledgements}
The authors would like to thank the referee for the valuable comments and suggestions which improved the manuscript and made it more readable.


\end{document}